\date{February 22, 2015}
\let\oldsection\section
\renewcommand\section{\setcounter{equation}{0}\oldsection}
\newtheorem{theorem}{Theorem}[section]
\newtheorem{lemma}{Lemma}[section]
\newtheorem{proposition}{Proposition}[section]
\newtheorem{remark}{Remark}[section]
\begin{document}

\title[Two-dimensional  Boussinesq equations with vertical dissipation]{Global well-posedness of the 2D Boussinesq equations with vertical dissipation}


\author{Jinkai~Li}
\address[Jinkai~Li]{Department of Computer Science and Applied Mathematics, Weizmann Institute of Science, Rehovot 76100, Israel.}
\email{jklimath@gmail.com}

\author{Edriss~S.~Titi}
\address[Edriss~S.~Titi]{
Department of Mathematics, Texas A\&M University, 3368 TAMU, College Station, TX 77843-3368, USA. ALSO, Department of Computer Science and Applied Mathematics, Weizmann Institute of Science, Rehovot 76100, Israel.}
\email{titi@math.tamu.edu and edriss.titi@weizmann.ac.il}

\keywords{ Two-dimensional Boussinesq equations; anisotropic dissipation;  limiting Sobolev embedding logarithmic inequality; Brezis-Gallouet-Wainger inequality.}
\subjclass[2010]{35A01, 35B45, 35Q86, 76D03, 76D09.}


\begin{abstract}
We prove the global well-posedness of the two-dimensional Boussinesq equations with only vertical dissipation. The initial data $(u_0,\theta_0)$ are required to be only in the space $X=\{f\in L^2(\mathbb R^2)\,|\,\partial_xf\in L^2(\mathbb R^2)\}$, and thus our result generalizes that in [C.~Cao, J.~Wu, \emph{Global regularity for the two-dimensional anisotropic Boussinesq equations with vertical dissipation}, Arch. Rational Mech. Anal., \bf208 \rm(2013), 985--1004], where the initial data are assumed to be in $H^2(\mathbb R^2)$. The assumption on the initial data is at the minimal level that is required to guarantee the uniqueness of the solutions. A logarithmic type limiting Sobolev embedding inequality for the $L^\infty(\mathbb R^2)$ norm, in terms of anisotropic Sobolev norms, and a logarithmic type Gronwall inequality are established to obtain the global in time a priori estimates, which guarantee the local solution to be a global one.
\end{abstract}

\maketitle

\allowdisplaybreaks

\section{Introduction}
\label{sec1}
The Boussinesq equations form a fundamental block in many geophysical models, such as those of atmospheric fronts and ocean circulations, see, e.g., Majda \cite{MAJDA1}, Pedlosky \cite{PED} and Vallis \cite{VALLIS}.
The Boussinesq equations in $\mathbb R^2$ read as
\begin{equation}
  \label{gbe}
  \left\{
  \begin{array}{l}
    \partial_tu+(u\cdot\nabla)u+\nabla p-\nu_1\partial_x^2u-\nu_2\partial_y^2u=\theta e_2,\\
    \text{div}\,u=0,\\
    \partial_t\theta+u\cdot\nabla\theta-\kappa_1\partial_x^2\theta -\kappa_2\partial_y^2\theta=0,
  \end{array}
  \right.
\end{equation}
with $e_2=(0,1)$, where the velocity $u=(u^1, u^2)$, the pressure $p$ and the temperature $\theta$ are the unknowns, while the viscous coefficients $\nu_i$ and diffusivity coefficients $\kappa_i$, $i=1,2$, are nonnegative constants.

In recent years there has been a lot of work investigating the Boussinesq system (\ref{gbe}). On the one hand, it is well-known that the Boussinesq system (\ref{gbe}) with full dissipation, i.e. when all viscous and diffusivity coefficients being positive constants, is global well-posed, see, e.g., Cannon--DiBenedetto \cite{CANDIB} and Foias--Manley--Temam \cite{FOIAS} (see also Temam \cite{TEMAM}); however, on the other hand, for the inviscid case, i.e. when all viscosities and diffusivity coefficients are zero, the global well-posedness is still an open question, and actually, as pointed out in Majda--Bertozzi \cite{MAJDA}, the two-dimensional inviscid Boussinesq equations are identical to the incompressible axi-symmetric swirling three-dimensional Euler equations.

As the intermediate cases between the two extreme cases (the full dissipation case and the inviscid case), the Boussinesq equations with partial dissipation, i.e. with partial viscosities or partial diffusivity, have recently been attracting the attention of many mathematicians. The case that with full viscosities, but without any diffusivity was proved to be global well-posed by Hou--Li \cite{HOULI} and Chae \cite{CHAE}, independently. Note that the case that with full diffusivity, but with no viscosities was also considered and proved to be global well-posed in Chae \cite{CHAE}. The initial data in \cite{CHAE,HOULI} are required to have some suitable high regularities. Some generalizations of the results in  \cite{CHAE,HOULI}, toward weakening the assumption on the initial data, were established in Hmidi--Keraani \cite{HMIKER} and Danchin--Paicu \cite{DANPAI}; in particular, no smoothness assumptions on the initial data are required in \cite{DANPAI}. More precisely, assuming the initial data are in $L^2(\mathbb R^2)$ is sufficient for both global existence and uniqueness. Therefore, the two-dimensional Boussinesq system with full viscosities and with or without diffusivity is global well-posed for any $L^2(\mathbb R^2)$ initial data. It should be pointed out that the global well-posedness results also hold for the system defined in bounded domains of $\mathbb R^2$; however, in this case, the initial data are required to have some appropriate smoothness, see, e.g., Hu--Kukavica--Ziane \cite{HUKUZ} and Lai--Pan--Zhao \cite{LAIPANZHAO}.

Note that all the papers \cite{CHAE,HOULI,HMIKER,DANPAI} consider the full viscosities case to guarantee the global well-posedness, i.e. both the horizontal viscosity $\nu_1$ and the vertical viscosity $\nu_2$ are assumed to be positive constants. One can further reduce the assumptions on the viscosities in the Boussinesq system without losing its global well-posedness. As a matter of fact, the works of Danchin--Paicu \cite{DANPAI2} and of Larios--Lunasin--Titi \cite{LARLUNTIT} show that only horizontal viscosity, i.e., $\nu_1 > 0$, and  $\nu_2 =\kappa_1=\kappa_2=0$, is sufficient for the global well-posedness of the two-dimensional Boussinesq equations, as long as the initial data are suitably regular. It is still unclear if  merely a vertical viscosity, i.e.,   $\nu_2 >0$ and $\nu_1 = \kappa_1=\kappa_2=0$, is sufficient for the global well-posedness of the two-dimensional Boussinesq system. However, it has been proven in Cao--Wu \cite{CAOWU13ARMA} that the two-dimensional Boussinesq equations with only vertical dissipation, i.e., with both vertical viscosity and vertical diffusivity ($\nu_1 = \kappa_1 =0$,  $\nu_2 >0$, and  $\kappa_2>0$), indeed admit a unique global classical solution, for any initial data in $H^2(\mathbb R^2)$.

The aim of this paper is to weaken the assumptions of \cite{CAOWU13ARMA}, concerning the initial data,  as possible as one can, without losing the global well-posedness of the system. More precisely, we are going to establish the global well-posedness of the following Boussinesq system in $\mathbb R^2$, under the minimal assumptions on the initial data at the level that are required for the uniqueness of the solutions
\begin{equation}\label{maineq}
  \left\{
  \begin{array}{l}
  \partial_tu+(u\cdot\nabla)u+\nabla p-\nu\partial_y^2u=\theta e_2,\\
  \text{div}\,u=0,\\
  \partial_t\theta+u\cdot\nabla\theta-\kappa\partial_y^2\theta=0,
  \end{array}
  \right.
\end{equation}
where $\nu$ and $\kappa$ are given positive constants. We consider the Cauchy problem to the above system in $\mathbb R^2$ and look for such solutions that vanish at infinity. We complement the system with the  initial condition
\begin{equation}
  \label{ic}
  (u,\theta)|_{t=0}=(u_0,\theta_0).
\end{equation}
Notably, our result and proof are equally valid for the periodic boundary conditions case, or on the two-dimensional sphere.

Throughout this paper, for $r\in[1,\infty]$ and positive integer $m$, we use $L^r(\mathbb R^2)$ and $H^m(\mathbb R^2)$ to denote the standard Lebessgue and Sobolev spaces on $\mathbb R^2$, respectively, while $H^{-1}(\mathbb R^2)$ is the dual space of $H^1(\mathbb R^2)$. For convenience, we always use $\|f\|_r$ to denote the $L^r(\mathbb R^2)$ norm of $f$.

Next, we state our main result.

\begin{theorem}\label{theorem}
  Assume that $(u_0,\theta_0,\partial_xu_0,\partial_x\theta_0)\in L^2(\mathbb R^2)$, and that $T >0$.  Then, there is a unique global solution $(u,\theta)$ to system (\ref{maineq}), subject to the initial condition (\ref{ic}), satisfying
  \begin{eqnarray*}
    &&(u,\theta)\in C([0,T]; L^2(\mathbb R^2))\cap L^2(0,T; H^1(\mathbb R^2)),\quad\\
    &&(\partial_xu,\partial_x\theta)\in L^\infty(0,T; L^2(\mathbb R^2)),\quad (\partial_{xy}^2u,\partial_{xy}^2\theta)\in L^2(0,T; L^2(\mathbb R^2)),\\
    &&(\partial_tu,\partial_t\theta)\in L^2(0,T; H^{-1}(\mathbb R^2)),\quad (\partial_tu,\partial_t\theta)\in L^2(\tau,T; L^2(\mathbb R^2)),\\
    &&(u,\theta)\in L^\infty(\tau,T; H^1(\mathbb R^2)),\quad(\partial_yu,\partial_y\theta)\in L^2(\tau,T; H^1(\mathbb R^2)),
  \end{eqnarray*}
  for any $\tau\in (0,T]$.
\end{theorem}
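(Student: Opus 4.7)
The plan is to construct a global strong solution by compactness from smooth approximating problems (e.g.\ Friedrichs mollification of the initial data combined with a vanishing horizontal-viscosity regularization) and to pass to the limit using uniform a~priori bounds; uniqueness I would obtain at the end by a direct $L^2$ energy estimate on the difference of two solutions. All the real work lies in the a~priori estimates, which I would carry out in the following order. Testing \eqref{maineq} against $(u,\theta)$ yields the basic bounds $(u,\theta)\in L^\infty(0,T;L^2)$ and $(\partial_y u,\partial_y\theta)\in L^2(0,T;L^2)$, and the transport-diffusion structure of the $\theta$ equation propagates all $L^p$ norms, $\|\theta(t)\|_p\le\|\theta_0\|_p$. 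The critical step is then to bound $(w,\eta):=(\partial_x u,\partial_x\theta)$ in $L^\infty(0,T;L^2)$.

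Differentiating \eqref{maineq} in $x$ produces
\begin{align*}
  \partial_tw+(u\cdot\nabla)w+(w\cdot\nabla)u+\nabla\partial_x p-\nu\partial_y^2 w &= \eta\,e_2,\\
  \partial_t\eta+u\cdot\nabla\eta+w\cdot\nabla\theta-\kappa\partial_y^2\eta &= 0,
\end{align*}
and testing against $(w,\eta)$ (using $\operatorname{div}u=0$ to eliminate the pressure and kill the $(u\cdot\nabla)$ transport terms) reduces the analysis to estimating the two trilinear integrals $I_1=-\int(w\cdot\nabla)u\cdot w\,dx\,dy$ and $I_2=-\int(w\cdot\nabla)\theta\,\eta\,dx\,dy$. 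Because the vertical dissipation supplies only $\partial_y w,\partial_y\eta\in L^2_tL^2$, no $\partial_x^2$ can be allowed to appear in the estimate. My strategy is to move $\partial_y$ derivatives around via integration by parts and the divergence-free relation $\partial_x u^1=-\partial_y u^2$, so that each bad trilinear product is recast in a form like $\int(\partial_y u^i)fg$ or $\int u^i(\partial_y f)g$, and then to invoke anisotropic Ladyzhenskaya-type inequalities such as
\[
  \|f\|_4^4\le C\|f\|_2^2\|\partial_x f\|_2\|\partial_y f\|_2,
\]
together with the logarithmic limiting Sobolev embedding promised in the abstract, of the schematic form
\[
  \|f\|_\infty \le C\,\|f\|_{H^1}\,\log^{1/2}\!\bigl(e+\|\partial_{xy}f\|_2\bigr),
\]
which controls the $L^\infty$ norm by the anisotropic space $\{f\,:\,f,\partial_x f,\partial_y f,\partial_{xy}f\in L^2\}$ at only a logarithmic cost. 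After absorbing a share of $\nu\|\partial_y w\|_2^2+\kappa\|\partial_y\eta\|_2^2$ into the left-hand side, one should arrive at a differential inequality of the form
\[
  \frac{d}{dt}\bigl(\|w\|_2^2+\|\eta\|_2^2\bigr)\le C\Phi(t)\bigl(\|w\|_2^2+\|\eta\|_2^2+e\bigr)\,\log\!\bigl(e+\|\partial_{xy}u\|_2^2+\|\partial_{xy}\theta\|_2^2\bigr),
\]
with $\Phi\in L^1(0,T)$ supplied by the basic energy bound.

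Closing the loop is then a matter of a logarithmic Gronwall inequality: the argument of the logarithm is a locally integrable function of $t$, since the dissipation already controls $\partial_{xy}u$ and $\partial_{xy}\theta$ in $L^2_tL^2$, so the inequality integrates to a finite (possibly doubly exponential in $T$) a~priori bound on $\|(w,\eta)\|_{L^\infty(0,T;L^2)}$. Given this bound, standard compactness yields a weak solution with the stated regularity, while the instantaneous smoothing $(u,\theta)\in L^\infty(\tau,T;H^1)$ for $\tau>0$ follows from the parabolic character of the vertical-heat part of \eqref{maineq} once $(u,\theta)\in L^2(0,T;H^1)$ is known, via multiplication by $t$ or $(t-\tau)_+$ and repeating the energy argument. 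Uniqueness comes from a standard energy argument on the difference, in which the dangerous transport term is again tamed by the same log-Sobolev inequality. I expect the principal obstacle to be the delicate bookkeeping in the estimates of $I_1$ and $I_2$: in each trilinear term exactly one factor must be estimated via the log-Sobolev embedding (producing the logarithm), one factor absorbed into the vertical dissipation, and one factor recognized as quadratic in the quantity being controlled by Gronwall, all while treating the buoyancy feedback $\eta\,e_2$ symmetrically between $w$ and $\eta$.
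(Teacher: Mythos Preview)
Your proposal has a genuine gap at the heart of the global a~priori estimate. When you apply a Brezis--Gallouet inequality of the schematic form $\|f\|_\infty\le C\|f\|_{H^1}\log^{1/2}(e+\|\partial_{xy}f\|_2)$ to $f=u^2$ (which is what the trilinear terms force after integration by parts), the prefactor $\|u^2\|_{H^1}$ contains $\|\partial_x u^2\|_2=\|w^2\|_2$, i.e., part of the very quantity $\|(w,\eta)\|_2$ you are trying to bound. The basic energy estimate supplies $\|u^2\|_2\in L^\infty_t$ and $\|\partial_y u^2\|_2\in L^2_t$, but \emph{not} $\|\partial_x u^2\|_2$; hence your $\Phi$ is not in $L^1(0,T)$ independently of the estimate you are closing, and the resulting differential inequality is really $A'+B\lesssim A^2\log(e+B)$ rather than $A'+B\lesssim\Phi(t)A\log(e+B)$ with $\Phi\in L^1$. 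Such an inequality yields only a local-in-time bound; indeed the paper performs exactly your $\partial_x$-energy calculation in Proposition~\ref{prop1}, obtains $f'\le Cf^3$, and uses it solely for local existence. Your remark that ``the dissipation already controls $\partial_{xy}u,\partial_{xy}\theta$ in $L^2_tL^2$'' is also circular: that control is the dissipation $B(t)$ in the same inequality, not something known beforehand.

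What you are missing is the Cao--Wu growth estimate (Proposition~\ref{lemgrowth}): $\sup_{r\ge2}\|u^2(t)\|_{2r}^2/(r\log r)$ is bounded by an explicit function of $t$ and $\|(u_0,\theta_0)\|_{H^1}$. The paper's logarithmic embedding (Lemma~\ref{lemlogsob}) is designed precisely to exploit this --- its prefactor is $\sup_{r\ge2}\|F\|_r/(r\log r)^\lambda$, not $\|F\|_{H^1}$ --- so that after inserting the Cao--Wu bound one gets $\|u^2\|_\infty^2\le K(t)\log(A+B)\log\log(A+B)$ with $K$ \emph{independent of $A$}. Only then does the logarithmic Gronwall (Lemma~\ref{lemloggro}) close globally. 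Note too that this step is carried out at the full $H^1$ level (Proposition~\ref{lemh1est}), since the Cao--Wu bound already needs $H^1$ data; the paper then handles $X$ data by combining the merely local $\partial_x$-estimate with instantaneous smoothing into $H^1$ and the global $H^1$ theory (Theorem~\ref{welh1}), rather than by a direct global $\partial_x$-bound as you propose.
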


\begin{remark}
(i) Our result is clear a generalization of that in \cite{CAOWU13ARMA},
where the $H^2(\mathbb R^2)$ smoothness on the initial data is required in \cite{CAOWU13ARMA}.

(ii) As it will be seen later, the regularity that $(\partial_xu,\partial_x\theta)\in L^2(0,T; L^2(\mathbb R^2))$ is required for the uniqueness, while such kind of regularity for (\ref{maineq}) can only be inherited from the initial data, because of the absence of the horizontal dissipation. Thus, our assumption on the initial data that $(\partial_xu_0,\partial_x\theta_0)\in L^2(\mathbb R^2)$ is natural and necessary for our proof.
\end{remark}

To prove Theorem \ref{theorem}, it suffices to prove the local well-posedness and establish some appropriate a priori estimates. The local well-posedness can be proven in a standard way. Thanks to the dissipation in the vertical direction, the local solution $(u,\theta)$ immediately belongs to $H^1(\mathbb R^2)$ away from the initial time. It turns out that the a priori $L^\infty(H^1(\mathbb R^2))$ estimate of $(u,\theta)$, away from the initial time, will be sufficient for extending the local solution to be a global one. Regarding the $H^1$ type estimates on $(u,\theta)$, we can obtain the following energy inequality
\begin{align*}
  \frac{d}{dt}\|(u,\theta)\|_{H^1}^2+\|(\partial_yu,\partial_{y}\theta) \|_{H^1}^2
  \leq 8(\|u^2\|_\infty^2+\|\theta\|_\infty^2+1)\|(u,\theta)\|_{H^1}^2,
\end{align*}
which indicates that we have to control $\|u^2\|_\infty^2$ in order to obtain the desired a priori $L^\infty(0,T; H^1(\mathbb R^2))$ estimate. To this end, we first establish a logarithmic type embedding inequality for anisotropic Sobolev spaces (see Lemma \ref{lemlogsob} below), which in particular implies that
\begin{align*}
  \|u^2\|_{\infty}^2\leq& C\max\left\{1,\sup_{r\geq2} \frac{\|u^2\|_{r}^2 }{ r\log r }\right\}\log(e^3+\|u\|_{H^1}+\|\partial_yu\|_{H^1}) \\ &\times\log\log(e^3+\|u\|_{H^1}+\|\partial_yu\|_{H^1}).
\end{align*}
Then recalling the previous $H^1$ energy inequality, and thanks to the fact that $\|u^2\|_r$ grows no faster than $\sqrt {r\log r}$, which is established in \cite{CAOWU13ARMA}, we end up with the following kind inequality
$$
A'(t)+B(t)\leq CA(t)\log B(t)\log\log B(t),
$$
where $A$ and $B$ are quantities involving $\|(u,\theta)\|_{H^1}^2$ and
$\|(\partial_yu,\partial_y\theta)\|_{H^1}^2$, respectively. Observing that the above inequality implies the boundedness, up to any finite time $T$, of the quantity $2A(t)+\int_0^tB(s)ds$ (see Lemma \ref{lemloggro}, below), and thus one gets the desired a priori $H^1$ estimate of $(u,\theta)$ and further establish a global solution.

Since the specific values of the positive coefficients $\nu$ and $\kappa$ play no roles in the argument of this paper, and thus, for simplicity, we suppose that
$$
\nu=\kappa=1.
$$
Throughout this paper, we always use $C$ to denote a generic positive constant which may vary from line to line.

The rest of this paper is organized as follows: in the next section, section \ref{sec2}, we state some preliminary results which will be used in the following sections; in section \ref{sec3}, we prove the global well-posedness of the Boussinesq system (\ref{maineq}) with $H^1(\mathbb R^2)$ initial data, based on which we can prove Theorem \ref{theorem} in the last section, section \ref{sec4}.

\section{Preliminaries}
\label{sec2}
In this section, we state some preliminary results, including a logarithmic type embedding inequalities for the anisotropic Sobolev spaces and a logarithmic type Gronwall inequality.

We first recall the following lemma.

\begin{lemma}[See Lemma 2.2 of \cite{CAOWU13ARMA}]
  \label{lemhold}
  Let $q\in[2,\infty)$, and assume that $f, g, \partial_yg, \partial_xh\in L^2(\mathbb R^2)$ and $h\in L^{2(q-1)}(\mathbb R^2)$. Then
  $$
  \int_{\mathbb R^2}|fgh|dxdy\leq C\|f\|_2\|g\|_2^{1-\frac1q}\|\partial_yg\|_2^{\frac1q}\|h\|_{2(q-1)}^{1-\frac1q}
  \|\partial_xh\|_2^{\frac1q},
  $$
  where $C$ is a constant depending only on $q$, and in particular, we have
  $$
  \int_{\mathbb R^2}|fgh|dxdy\leq C\|f\|_2\|g\|_2^{\frac12}\|\partial_yg\|_2^{\frac12}\|h\|_2^{\frac12}\|\partial_x h\|_2^{\frac12},
  $$
  for an absolute positive constant $C$.
\end{lemma}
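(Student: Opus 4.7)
The plan is to reduce the two-dimensional integral to iterated one-dimensional estimates, combining H\"older's inequality with the classical 1D Gagliardo--Nirenberg inequalities applied successively in $x$ and in $y$, together with the observation that the $L^2_x$-norm of $g$, regarded as a function of $y$ alone, inherits one derivative in $y$ from $\partial_y g$.

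First, for each fixed $y\in\mathbb R$, I would apply H\"older's inequality in $x$ with exponents $(2,2,\infty)$ and then use the one-dimensional interpolation
$$\|h(\cdot,y)\|_{L^\infty_x}\le C_q\|h(\cdot,y)\|_{L^{2(q-1)}_x}^{1-1/q}\|\partial_x h(\cdot,y)\|_{L^2_x}^{1/q}$$
(the exponent $1/q$ being forced by 1D scaling when the base Lebesgue exponent on $h$ is $2(q-1)$), to obtain a pointwise-in-$y$ bound on $\int_{\mathbb R}|fgh|\,dx$ of the form $A(y)B(y)C(y)^{1-1/q}D(y)^{1/q}$, where $A=\|f\|_{L^2_x}$, $B=\|g\|_{L^2_x}$, $C=\|h\|_{L^{2(q-1)}_x}$ and $D=\|\partial_x h\|_{L^2_x}$. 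Next, integrating in $y$ by H\"older's inequality applied to these four factors with exponents $\bigl(2,\,2q/(q-2),\,2q,\,2q\bigr)$ (whose reciprocals sum to $1$) collapses the $f$-, $h$-, and $\partial_x h$-contributions directly into $\|f\|_2$, $\|h\|_{2(q-1)}^{1-1/q}$, and $\|\partial_x h\|_2^{1/q}$ respectively; when $q=2$ the second exponent degenerates to $\infty$.

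The main obstacle is the remaining $B$-factor, where one must show $\|B\|_{L^{2q/(q-2)}_y}\le C\|g\|_2^{1-1/q}\|\partial_y g\|_2^{1/q}$, with $L^{2q/(q-2)}$ replaced by $L^\infty$ when $q=2$. The key point here is that $B(y)^2=\int_{\mathbb R}|g(x,y)|^2\,dx$ is absolutely continuous in $y$, and differentiating under the integral together with the Cauchy--Schwarz inequality yields the pointwise bound $|B'(y)|\le\|\partial_y g(\cdot,y)\|_{L^2_x}$, so that $B\in H^1(\mathbb R)$ with $\|B\|_{L^2_y}=\|g\|_2$ and $\|B'\|_{L^2_y}\le\|\partial_y g\|_2$. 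The standard 1D Gagliardo--Nirenberg inequality on $B$, with interpolation exponent $\theta=1/q$ (again dictated by 1D scaling), then delivers the required bound. Combining everything produces the general inequality, and the special case is obtained by specializing $q=2$, so that every interpolation exponent becomes $1/2$ and the resulting constant is absolute.
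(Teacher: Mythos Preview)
Your argument is correct. The paper does not give its own proof of this lemma but simply cites Lemma~2.2 of \cite{CAOWU13ARMA}; your iterated H\"older/one-dimensional Gagliardo--Nirenberg approach, together with the observation that $B(y)=\|g(\cdot,y)\|_{L^2_x}$ lies in $H^1(\mathbb R)$ with $\|B'\|_{L^2_y}\le\|\partial_y g\|_2$, is exactly the standard route used there.
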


Next, we state and prove a logarithmic type limiting Sobolev embedding inequalities for the anisotropic Sobolev spaces in $L^\infty (\mathbb{R}^2)$,  which generalizes that in Cao--Li--Titi \cite{CAOLITITI}, where the relevant inequality for the standard Sobolev spaces was established. Notably, this inequality also generalizes the classical Brezis--Gallouate--Wainger inequality \cite{Brezis_Gallouet_1980,Brezis_Wainger_1980} (see also \cite{CAOFARHATTITI,CAOLITITI-CPAM,CAOWU13ARMA}, and references therein for other similar inequalities).

\begin{lemma}[Logarithmic limiting Sobolev embedding]\label{lemlogsob}
Denote $\textbf{p}=(p_1, p_2,\cdots, p_N)$, with $p_i\in(1,\infty)$, and suppose that $\sum_{i=1}^N\frac{1}{p_i}<1.$
Then, we have
\begin{align*}
  \|F\|_{L^\infty(\mathbb R^N)}\leq& C_{N,\textbf{p},\lambda}\max\left\{1,\sup_{r\geq2} \frac{\|F\|_{L^r(\mathbb R^N)}}{(r\log r)^\lambda}\right\}[\log \mathcal N_{\textbf{p}}(F) \log\log \mathcal N_{\textbf{p}}(F) ]^\lambda,
\end{align*}
for any $\lambda>0$ and for any function $F$ such that all the quantities on the right-hand side are well-defined and finite, where
$$
\mathcal N_{\textbf{p}}(F)=\sum_{i=1}^N(\|F\|_{L^{p_i}(\mathbb R^N)}+\|\partial_iF\|_{L^{p_i}(\mathbb R^N)})+e^3.
$$
\end{lemma}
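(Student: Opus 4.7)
My plan is to combine a Littlewood--Paley decomposition with an anisotropic Bernstein-type estimate exploiting the subcriticality $\sigma := \sum_{i=1}^N 1/p_i < 1$, and then to optimize the frequency cutoff so that a borderline $L^r$ bound supplies the logarithmic factors. Write $F = \sum_{j \geq -1} \Delta_j F$ for the standard isotropic dyadic decomposition, fix an integer $J \geq 3$ to be chosen, and split
$$\|F\|_{L^\infty} \leq \sum_{j=-1}^{J-1}\|\Delta_j F\|_\infty + \sum_{j\geq J}\|\Delta_j F\|_\infty =: I_{\text{low}} + I_{\text{high}}.$$
For $I_{\text{low}}$, the standard Bernstein inequality gives $\|\Delta_j F\|_\infty \leq C\,2^{jN/r}\|F\|_r$ for every $r\geq 2$, so a geometric sum yields $I_{\text{low}} \leq C\,2^{JN/r}\|F\|_r$. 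Setting $M_F := \max\{1,\, \sup_{r\geq 2}\|F\|_r/(r\log r)^\lambda\}$ and choosing $r := J$ (which keeps the prefactor $2^{JN/r}=2^N$ bounded) produces $I_{\text{low}} \leq C\|F\|_J \leq C M_F (J\log J)^\lambda$, from which the $[\log \mathcal{N}_{\mathbf{p}}(F) \log\log \mathcal{N}_{\mathbf p}(F)]^\lambda$ factor will emerge once $J$ is optimized.

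For $I_{\text{high}}$ the goal is to establish geometric decay $\|\Delta_j F\|_\infty \leq C\,2^{-\delta j}\mathcal{N}_{\mathbf p}(F)$ with $\delta := 1-\sigma > 0$. To prove this I would introduce a smooth conical partition of unity on the frequency sphere and decompose $\Delta_j = \sum_{i=1}^N \Delta_{j,i}$, so that $\Delta_{j,i}F$ has Fourier support in $\{|\xi|\sim 2^j,\, |\xi_i|\geq c_N|\xi|\}$. On such a piece $\partial_i$ is a Fourier multiplier whose symbol is bounded below by $c\,2^j$, and a Mikhlin-type bound yields $\|\Delta_{j,i}F\|_{p_i}\leq C\,2^{-j}\|\partial_i F\|_{p_i}$. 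Since $|\xi_k|\leq 2^{j+1}$ in every variable $k$, applying the one-dimensional Bernstein inequality iteratively variable-by-variable (after ordering the $p_i$'s and using Minkowski's integral inequality to convert the resulting mixed norm $\|\cdot\|_{L^{p_1}_{x_1}\cdots L^{p_N}_{x_N}}$ into single-index $L^{p_i}(\mathbb{R}^N)$ norms, with an $L^{p_i}$-interpolation against $\|F\|_{p_i}$ absorbing cross-terms) yields $\|\Delta_{j,i}F\|_\infty \leq C\,2^{j\sigma}\cdot 2^{-j}\,\mathcal{N}_{\mathbf p}(F)$. Summing over $i$ and then over $j\geq J$ gives $I_{\text{high}}\leq C\,2^{-\delta J}\,\mathcal{N}_{\mathbf p}(F)$.

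To conclude I would choose $J := \lceil \delta^{-1}\log_2\mathcal{N}_{\mathbf p}(F)\rceil$, which makes $I_{\text{high}} = O(1)$. Since $\mathcal{N}_{\mathbf p}(F)\geq e^3$ forces $J\geq 3$, one has $J\log J \leq C\log\mathcal{N}_{\mathbf p}(F)\,\log\log\mathcal{N}_{\mathbf p}(F)$, so $I_{\text{low}} \leq CM_F\,[\log\mathcal{N}_{\mathbf p}(F)\,\log\log\mathcal{N}_{\mathbf p}(F)]^\lambda$, and the $O(1)$ contribution is absorbed using $M_F\geq 1$. The main obstacle I foresee is the anisotropic Bernstein step: extracting uniform decay $2^{-\delta j}$ from only the hypothesis $\sigma<1$ (rather than the much stronger $p_i>N$ for every $i$, which would make the isotropic approach work directly) requires carefully ordering the variables in the iterated Bernstein, invoking Minkowski's integral inequality for mixed norms, and selecting the ``right'' direction of differentiation on each piece $\Delta_{j,i}F$, so that the full deficit $1-\sigma$ survives cleanly as the exponential rate $\delta$.
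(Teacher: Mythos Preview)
Your Littlewood--Paley approach is quite different from the paper's, which instead quotes an elementary anisotropic Gagliardo-type inequality from \cite{CAOLITITI}, namely $|f(0)|^q\le C_{N,\mathbf p}\,q\sum_i\|f\|_{(q-1)\kappa_i}^{q-1}\|\partial_if\|_{p_i}$ for compactly supported $f$, takes $q$-th roots, and then optimizes by setting $q=\log\mathcal N_{\mathbf p}(F)$. That multiplicative inequality already encodes the full anisotropic embedding, and the optimization in $q$ delivers the logarithmic factors directly. Your low-frequency step also contains a small slip: the geometric sum $\sum_{j<J}2^{jN/r}$ is bounded by $C\,2^{JN/r}$ only with a constant $\sim 1/(2^{N/r}-1)\sim r$, so setting $r=J$ actually gives $I_{\text{low}}\le CJ\,\|F\|_J$, costing an extra factor $\log\mathcal N_{\mathbf p}(F)$ in the end. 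This is easily repaired by applying Bernstein once to the full low-pass projection $S_JF$ rather than to each $\Delta_jF$ separately.

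The serious gap is in the high-frequency estimate. On the cone $\Delta_{j,i}$ you gain $2^{-j}$ from $\partial_i$, but passing from $L^{p_i}$ to $L^\infty$ via isotropic Bernstein costs $2^{jN/p_i}$, and $N/p_i>\sigma$ whenever $p_i$ is the smallest exponent (e.g.\ $N=2$, $p_1=3/2$, $p_2=6$, where $N/p_1=4/3>5/6=\sigma$). Your proposed cure---iterated one-dimensional Bernstein together with Minkowski and interpolation---naturally produces a \emph{mixed} norm $\|\cdot\|_{L^{p_1}_{x_1}\cdots L^{p_N}_{x_N}}$, and there is no general mechanism to bound that by the single-index norms $\|F\|_{p_k}$, $\|\partial_kF\|_{p_k}$ appearing in $\mathcal N_{\mathbf p}(F)$: Minkowski only permutes the order of integration under monotonicity of exponents, it does not collapse a mixed norm into an isotropic one. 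You yourself flag this as the ``main obstacle,'' and as written the argument does not close. Any independent proof that $\|\Delta_jF\|_\infty\le C\,2^{-(1-\sigma)j}\mathcal N_{\mathbf p}(F)$ would essentially reprove the anisotropic embedding the paper takes as input, at which point the Littlewood--Paley layer is no longer doing useful work.
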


\begin{proof}
Without loss of generality, we can suppose that
$|F(0)|=\|F\|_\infty$. Choose a function $\phi\in C_0^\infty(B_2)$, with $0\leq\phi\leq1$, and $\phi\equiv1$ on $B_{1}$. Setting $f=F\phi$, we have (see (5.3) in \cite{CAOLITITI})
\begin{equation}\label{2.5}
|f(0)|^q\leq C_{N,\textbf{p}}q\sum_{i=1}^N \|f\|_{(q-1)\kappa_i}^{q-1}\|\partial_if\|_{p_i},
\end{equation}
for any $q\geq3$, where
$$
\kappa_i=\frac{p_i\left(1+\sum_{j=1}^N\alpha_j\right)}{1-\sum_{j=1}^N\alpha_j}, \quad \alpha_i=\frac{1}{p_i}.
$$
It should be mentioned that the proof of (\ref{2.5}) was given for the case $N\geq3$ in \cite{CAOLITITI}; however, as pointed out there that the same inequality also holds for the two-dimensional case by a similar argument (see also Cao--Fahart--Titi \cite{CAOFARHATTITI} for a similar result in $2D$).

One can easily verify that
\begin{eqnarray*}
  &&\|F\|_\infty=|F(0)|=|f(0)|,\quad \|f\|_{(q-1)\kappa_i}\leq\|F\|_{(q-1)\kappa_i},\\
  &&\|\partial_if\|_{p_i}\leq C(\|F\|_{p_i}+\|\partial_iF\|_{p_i})\leq C\mathcal{N}_{\textbf{p}}(F).
\end{eqnarray*}
With the aid of the above inequalities, and noticing that $q^{\frac{1}{q}}\leq C$ and $(q-1)\kappa_i\geq2$, for any $q\in[3,\infty)$, we deduce from (\ref{2.5}) that
\begin{align*}
  \|F\|_\infty\leq&C_{N,\textbf{p}}\sum_{j=1}^N\|F\|_{(q-1)\kappa_i}^{1-\frac{1}{q}} \mathcal{N}_{\textbf{p}}(F)^{\frac{1}{q}}\\
  =&C_{N,\textbf{p}}\sum_{i=1}^N\left\{\frac{\|F\|_{(q-1) \kappa_i}^{1-\frac{1}{q}}} {[(q-1)\kappa_i\log((q-1)\kappa_i)]^{\lambda(1-\frac{1}{q})}}\right. \\
  &\quad\times[(q-1)\kappa_i\log((q-1)\kappa_i)] ^{\lambda\left(1-\frac{1}{q}\right)}\bigg\} \mathcal{N}_{\textbf{p}}(F)^{\frac{1}{q}}\\
  \leq&C_{N,\textbf{p},\lambda}\sum_{i=1}^N \left\{\frac{\|F\|_{(q-1)\kappa_i}}{[(q-1)\kappa_i\log((q-1)\kappa_i)]
  ^\lambda}
  \right\}^{1-\frac{1}{q}}(q\log q)^{\lambda}\mathcal{N}_{\textbf{p}}(F)^{\frac{1}{q}}\\
  \leq&C_{N,\textbf{p},\lambda}\max\left\{1,\sup_{r\geq2} \frac{\|F\|_r}{(r\log r)^\lambda}\right\}(q\log q)^\lambda\mathcal{N}_{\textbf{p}}(F)^{\frac{1}{q}}.
\end{align*}
Therefore
$$
  \|F\|_\infty\leq C_{N,\textbf{p},\lambda}\max\left\{1,\sup_{r\geq2} \frac{\|F\|_r}{(r\log r)^\lambda}\right\}
   (q\log q)^\lambda\mathcal{N}_{\textbf{p}}(F)^{\frac{1}{q}} ,
$$
for any $q\geq 3$. Noticing that $\log\mathcal{N}_{\textbf{p}}(F)\geq3$, one can choose $q=\log\mathcal{N}_{\textbf{p}}(F)$ in the above inequality and end up with
\begin{align*}
  \|F\|_\infty\leq&C_{N,\textbf{p},\lambda}\max\left\{1,\sup_{r\geq2} \frac{\|F\|_r}{(r\log r)^\lambda}\right\}[\log\mathcal{N}_{\textbf{p}}(F) \log\log\mathcal{N}_{\textbf{p}}(F)]^\lambda,
\end{align*}
for any $\lambda>0$, which proves the conclusion.
\end{proof}

Finally, we prove a logarithmic Gronwall inequality stated in the following lemma. This lemma will be applied later to establish the global in time a priori estimates.

\begin{lemma}[Logarithmic Gronwall inequality]
\label{lemloggro}
 Let $T\in(0,\infty)$ be given.  Let $A$ and $B$ be defined and integrable functions on  $(0,T)$, with $A,B\geq e$,  such that $A$ is absolutely continuous on $(0,T)$ and continuous on $[0,T)$. Suppose that
\begin{equation}\label{2.1}
A'(t)+B(t)\leq KA(t)\log B(t)\log\log B(t),
\end{equation}
for $t\in(0,T)$, where $K\geq1$ is a constant. Then
$$
2A(t)+\int_0^tB(t)\leq 512K^2Q^2(t)+2A(0),
$$
for any $t\in[0,T)$, where
$$
Q(t)=\exp\left\{\exp\left\{(\log\log A(0)+260K^2t)e^{Kt}\right\}\right\}.
$$
\end{lemma}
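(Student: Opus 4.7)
The plan is to convert (\ref{2.1}) into a scalar linear ODE for $\log\log G$, where $G(t):=2A(t)+\int_0^tB(s)\,ds$, and then solve it by the integrating-factor method. First, I would combine (\ref{2.1}) with the trivial bound $2A\leq G$ in order to obtain a self-contained differential inequality for $G$. Doubling (\ref{2.1}) and subtracting $B$ gives
$$
G'(t)=2A'(t)+B(t)\leq 2KA(t)\log B(t)\log\log B(t)-B(t)\leq KG(t)\log B(t)\log\log B(t),
$$
and hence $(\log G)'(t)\leq K\log B(t)\log\log B(t)$.

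The next step is a pointwise bootstrap controlling $\log B$ and $\log\log B$ in terms of $\log G$ and $\log\log G$. From (\ref{2.1}) and $A\leq G/2$ one has $B\leq KG\log B\log\log B-A'$; using the elementary inequality $b\geq 2(\log b+\log\log b)$ valid for $b\geq 8$ (applied with $b=\log B$), together with the standing assumption $A,B\geq e$, one extracts pointwise bounds of the form
$$
\log B(t)\leq C_1\bigl(\log K+\log G(t)\bigr),\qquad \log\log B(t)\leq\log\log G(t)+C_2,
$$
for absolute constants $C_1,C_2$. Substituting these into the inequality from the first step, and dividing by $\log G\cdot\log\log G$, yields a linear differential inequality
$$
(\log\log G)'(t)\leq K\log\log G(t)+CK^2,
$$
where $C$ is another absolute constant.

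The final step solves this ODE using the integrating factor $e^{-Kt}$, producing $\log\log G(t)\leq e^{Kt}\bigl(\log\log G(0)+CKt\bigr)$. Since $\log\log G(0)=\log\log(2A(0))\leq\log\log A(0)+1$, after tracking constants one obtains $\log\log G(t)\leq(\log\log A(0)+260K^2t)e^{Kt}=\log\log Q(t)$. Exponentiating twice bounds $G(t)\leq Q(t)$, and the universal constants from the bootstrap are then absorbed into the prefactor $512K^2$, giving $2A(t)+\int_0^tB(s)\,ds=G(t)\leq 512K^2Q(t)^2+2A(0)$.

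I expect the main obstacle to be the pointwise bootstrap: (\ref{2.1}) provides only an upper bound on $A'+B$, so $A'(t)$ could in principle be very negative, spoiling the naive estimate $B\leq KG\log B\log\log B$. Handling this will likely require either a separate argument bounding the total negative variation of $A$ (using $A\geq e$ to prevent unbounded decrease) or passing to a monotone majorant of $A$ before applying the bootstrap. Getting the precise constants $260K^2$ and $512K^2$ that appear in the statement will require careful accounting throughout both the bootstrap and the integrating-factor calculation.
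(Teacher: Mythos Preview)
Your approach differs from the paper's in a substantial way, and the obstacle you flag in your final paragraph is a genuine gap. The pointwise bound $\log B(t)\leq C_1(\log K+\log G(t))$ is simply not available: inequality (\ref{2.1}) gives only an upper bound on $A'+B$, so $A'(t)$ may be arbitrarily negative at a point, allowing $B(t)$ to be arbitrarily large while $G(t)$ remains moderate. Neither of your suggested fixes (bounding the total negative variation of $A$, or passing to a monotone majorant) would produce a \emph{pointwise} relation between $B$ and $G$, which is what your substitution step requires. There is also an arithmetic slip: $(\log\log G)'=(\log G)'/\log G$, not $(\log G)'/(\log G\cdot\log\log G)$, so even granting the bootstrap the reduction to a linear ODE in $\log\log G$ is not as written.

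The paper avoids any pointwise control of $B$ by iterating a different substitution. Divide (\ref{2.1}) by $A$ and set $A_1=\log A$, $B_1=B/A$; since $\log B=A_1+\log B_1$, one obtains
\[
A_1'+B_1\leq K(A_1+\log B_1)\log(A_1+\log B_1).
\]
The crucial point is that $B_1$ remains on the \emph{left}: using $\log z\leq 4z^{1/4}$ and Young's inequality, the $\log B_1$ factors on the right are absorbed into $\tfrac12 B_1$, leaving
\[
A_1'+\tfrac12(A_1+B_1)\leq (16K)^2+A_1+KA_1\log(A_1+B_1).
\]
Dividing now by $A_1$ and repeating with $A_2=\log A_1$, $B_2=(A_1+B_1)/(2A_1)$ yields, after one more absorption via $\log z\leq 2z^{1/2}$, the genuinely linear inequality $A_2'+\tfrac12 B_2\leq 260K^2+KA_2$. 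Standard Gronwall then gives $A_2(t)\leq e^{Kt}(A_2(0)+260K^2t)$, i.e.\ $A(t)\leq Q(t)$. Only \emph{after} this does one return to (\ref{2.1}), replace $A$ by $Q(t)$, use $\log B\log\log B\leq 16 B^{1/2}$ and Young once more to obtain $A'+\tfrac12 B\leq 256K^2Q^2(t)$, and integrate in $t$. The guiding idea you are missing is to keep a $B$-type term on the left-hand side at every stage, so that Young's inequality can swallow the sublinear $\log B$ factors on the right; no pointwise comparison of $B$ with $A$ or $G$ is ever needed.
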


\begin{proof}
Dividing both sides of inequality (\ref{2.1}) by $A$, and defining
$$
A_1=\log A,\quad B_1=\frac{B}{A},
$$
one has
\begin{equation}
A_1'+B_1\leq K(A_1+\log B_1)\log(A_1+\log B_1).\label{2.2}
\end{equation}
One can easily verify that
\begin{equation}
\log z\leq z,\quad\log z\leq 4z^{1/4},\quad\mbox{for }z\in(0,\infty).\label{2.4}
\end{equation}
Thus, we can deduce from (\ref{2.2}), (\ref{2.4}) and the Young inequality that
\begin{align*}
  A_1'+B_1\leq&K(A_1+\log B_1)\log(A_1+B_1)\\
  =&KA_1\log(A_1+B_1)+K\log B_1\log(A_1+B_1)\\
  \leq&KA_1\log(A_1+B_1)+16KB_1^{1/4}(A_1+B_1)^{1/4}\\
  \leq&KA_1\log(A_1+B_1)+16K(A_1+B_1)^{1/2}\\
  \leq&\frac12(A_1+B_1)+\frac12(16K)^2+KA_1\log(A_1+B_1),
\end{align*}
from which, one gets
$$
A_1'+\frac12(A_1+B_1)\leq A_1+(16K)^2+KA_1\log(A_1+B_1).
$$
Dividing both sides of the above inequality by $A_1$, noticing that $A_1\geq1$, and denoting
$$
A_2=\log A_1, \quad B_2=\frac{A_1+B_1}{2A_1},
$$
one obtains
\begin{equation}
  \label{2.3}
  A_2'+B_2\leq1+K+(16K)^2+K(A_2+\log B_2).
\end{equation}
One can easily verify that
$$
\log z\leq 2z^{1/2},\quad\mbox{ for }z\in(0,\infty).
$$
Thus, recalling that $K\geq1$, it follows from (\ref{2.3}) and the Young inequality that
\begin{align*}
  A_2'+B_2\leq&1+K+(16K)^2+KA_2+2KB_2^{1/2}\\
  \leq&\frac12B_2+1+K+2K^2+(16K)^2+KA_2
  \leq \frac12B_2+260K^2+KA_2,
\end{align*}
which implies
$$
A_2'+\frac12B_2\leq260K^2+KA_2.
$$
Applying the Gronwall inequality to the above inequality yields
$$
A_2(t)\leq e^{Kt}(A_2(0)+260K^2t)=e^{Kt}(\log\log A(0)+260K^2t),
$$
for any $t\in[0,T)$, and therefore, recalling the definition of $A_2$, we have
$$
A(t)=e^{e^{A_2(t)}}\leq \exp\left\{\exp\left\{(\log\log A(0)+260K^2t)e^{Kt}\right\}\right\}=:Q(t).
$$
On account of the above estimate, and recalling (\ref{2.4}), it follows from (\ref{2.1}) and the Young inequality that
\begin{align*}
  A'+B\leq&KA\log B\log\log B\leq KA(\log B)^2\\
  \leq&KA(4B^{1/4})^2=16KAB^{1/2}\leq 16KQ(t)B^{1/2}\\
  \leq&\frac12B+(16K)^2Q^2(t)=\frac12B+256K^2Q^2(t).
\end{align*}
Integrating the above inequality over $(0,t)$ yields the conclusion.
\end{proof}

\section{Global well-posedness : $H^1$ initial data}
\label{sec3}
In this section, we prove the global well-posedness of solutions to system (\ref{maineq}) with $H^1$ initial data. This result will be used to prove the global well-posedness with lower regular initial data in the next section.

We first recall the following global well-posedness result with $H^2$ initial data.

\begin{proposition}[See Theorem 1.1 in \cite{CAOWU13ARMA}]
  \label{lemglo}
 For any initial data $(u_0,\theta_0)\in H^2(\mathbb R^2)$, there is a unique global solution $(u,\theta)$ to system (\ref{maineq}), with initial data $(u_0,\theta_0)$, satisfying
 $$
 (u,\theta)\in C([0,\infty); H^2(\mathbb R^2)).
 $$
\end{proposition}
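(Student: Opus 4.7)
Strictly speaking the statement is a verbatim reformulation of Theorem 1.1 of \cite{CAOWU13ARMA}, and so it would suffice to cite that paper; this is how the proposition is in fact used in the sequel. Nevertheless, for orientation let me record the plan of proof that one would follow to reprove it from scratch, since several of its ingredients reappear, in a sharper form, in the remainder of this section.

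The first step is local well-posedness in $H^2(\mathbb R^2)$. I would run a standard Friedrichs (spectral) approximation scheme: the vertical dissipation is enough to close a short-time $H^2$ energy estimate uniformly in the cut-off parameter, while uniqueness of the local solution follows from an $L^2$ difference estimate in which the horizontal-derivative loss from the advection term is absorbed by the vertical dissipation via Lemma~\ref{lemhold}.

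The bulk of the work is then to rule out finite-time blow-up of $\|(u,\theta)\|_{H^2}$ through a hierarchy of a priori estimates. The starting point is the $L^2$ energy identity together with the maximum principle for $\theta$, which yields uniform control of $\|\theta\|_{L^p}$ for every $p\in[2,\infty]$. The key intermediate estimate is the $L^p$ growth bound $\|u^2\|_{L^p}\le C\sqrt{p\log p}$ for large $p$, obtained by testing the vertical momentum equation with $|u^2|^{p-2}u^2$ and using the anisotropic H\"older inequality of Lemma~\ref{lemhold} to dominate the advection and pressure contributions by the vertical dissipation, while carefully tracking the $p$-dependence of all constants. Given this, an $H^1$ energy estimate for $(u,\theta)$ combined with an anisotropic Brezis--Gallou\"et--Wainger-type bound on $\|u^2\|_\infty$ produces a differential inequality of the double-logarithmic form (\ref{2.1}); Lemma~\ref{lemloggro} then propagates the $H^1$ bound to any finite $T$. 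A further $H^2$ estimate, with the top-order nonlinearity controlled once more by Lemma~\ref{lemhold} together with the $H^1$ control already in hand, closes the $H^2$ bound, after which a standard continuation argument yields the global solution.

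The main obstacle is the $L^p$ growth estimate: the absence of horizontal dissipation rules out isotropic Gagliardo--Nirenberg inequalities, so one has to exploit the anisotropic structure together with the incompressibility relation $\partial_x u^1=-\partial_y u^2$ to recover horizontal control from the vertical dissipation. Once that input is secured, the remainder is essentially careful bookkeeping driven by the logarithmic Gronwall inequality of Lemma~\ref{lemloggro}.
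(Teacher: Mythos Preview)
Your proposal is correct on the essential point: the paper does not give its own proof of this proposition at all---it is simply quoted from \cite{CAOWU13ARMA} and used as a black box, exactly as you say in your first sentence. There is therefore nothing in the paper to compare your sketch against.

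One small remark on the sketch itself: you invoke Lemma~\ref{lemloggro} (the logarithmic Gronwall inequality) as part of the \cite{CAOWU13ARMA} argument, but that lemma is a contribution of the present paper, introduced precisely to streamline and sharpen the original proof. In \cite{CAOWU13ARMA} the double-logarithmic differential inequality is handled by a more ad hoc bootstrap rather than by the clean statement of Lemma~\ref{lemloggro}. This is harmless for orientation, but if you want the sketch to be a faithful outline of the cited result you should either point this out or replace that step by the original treatment.
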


The solution $(u,\theta)$ to system (\ref{maineq}), with initial data $(u_0,\theta_0)\in H^2(\mathbb R^2)$, satisfies some basic energy estimates stated in the following proposition.

\begin{proposition}[See Lemma 3.3 in \cite{CAOWU13ARMA}]\label{lembas}
Let $(u,\theta)$ be a classical solution to system (\ref{maineq}), with initial data $(u_0,\theta_0)\in H^2(\mathbb R^2)$, then one has
\begin{eqnarray*}
  &&\sup_{0\leq s\leq t}\|u(s)\|_2^2+2\int_0^t\|\partial_yu\|_2^2ds \leq (\|u_0\|_2+t\|\theta_0\|_2)^2,\\
  &&\sup_{0\leq s\leq t}\|\theta(s)\|_q^q+q(q-1)\int_0^t\||\theta|^{\frac{q}{2}-1}\partial_y\theta \|_2^2ds\leq\|\theta_0\|_q^q,\quad\mbox{for every } q\in(1,\infty).
\end{eqnarray*}
\end{proposition}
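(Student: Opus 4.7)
The plan is to derive both bounds by standard energy methods, handling the temperature inequality first (which is decoupled from the velocity in the sense that the $\theta$-equation is a pure transport-diffusion with divergence-free drift), and then using the $L^2$ case of the temperature bound as input to close the velocity estimate. Since we have a classical $H^2$-solution from Proposition \ref{lemglo}, every integration by parts below is justified by the decay at infinity.

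For the temperature inequality, I would multiply the third equation of (\ref{maineq}) by $|\theta|^{q-2}\theta$ and integrate over $\mathbb R^2$. The time derivative contributes $\frac{1}{q}\frac{d}{dt}\|\theta\|_q^q$. The transport term equals $\frac{1}{q}\int u\cdot\nabla(|\theta|^q)\,dxdy$ and vanishes after an integration by parts, using $\mathrm{div}\,u=0$ together with decay at infinity. The diffusion term produces
$$-\int \partial_y^2\theta\cdot|\theta|^{q-2}\theta\,dxdy=(q-1)\int|\theta|^{q-2}(\partial_y\theta)^2\,dxdy=(q-1)\||\theta|^{q/2-1}\partial_y\theta\|_2^2,$$
and integrating in time gives the claim. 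The slight nuisance here is that for $q\in(1,2)$ the factor $|\theta|^{q-2}$ is singular where $\theta=0$; I would dispose of this by replacing $|\theta|^{q-2}\theta$ with the regularized multiplier $(\theta^2+\varepsilon)^{(q-2)/2}\theta$, deriving the same identity with $\varepsilon>0$, and letting $\varepsilon\to 0^+$ via monotone/dominated convergence (the $H^2$ regularity provides ample control).

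For the velocity inequality, I would first apply the temperature estimate with $q=2$ to get $\|\theta(t)\|_2\le\|\theta_0\|_2$ for all $t\ge 0$. Then I multiply the momentum equation by $u$ and integrate; the convection term vanishes by $\mathrm{div}\,u=0$, the pressure term vanishes after integration by parts against $\mathrm{div}\,u$, the vertical viscosity produces $\|\partial_yu\|_2^2$, and the buoyancy term is $\int\theta u^2\,dxdy$, bounded by $\|\theta\|_2\|u^2\|_2\le\|\theta_0\|_2\|u\|_2$ since $|u^2|\le|u|$ pointwise. This yields
$$\tfrac{d}{dt}\|u\|_2^2+2\|\partial_yu\|_2^2\le 2\|\theta_0\|_2\|u\|_2.$$
Dropping the (nonnegative) dissipation on the left and rewriting as $\frac{d}{dt}\|u\|_2\le\|\theta_0\|_2$, I integrate to obtain $\|u(t)\|_2\le\|u_0\|_2+t\|\theta_0\|_2$. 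Substituting this bound back into the right-hand side of the energy identity and integrating once more in time yields
$$\|u(t)\|_2^2+2\int_0^t\|\partial_yu(s)\|_2^2\,ds\le\|u_0\|_2^2+2\|\theta_0\|_2\int_0^t(\|u_0\|_2+s\|\theta_0\|_2)\,ds=(\|u_0\|_2+t\|\theta_0\|_2)^2,$$
which gives both the pointwise bound on $\|u(\cdot)\|_2^2$ (upon replacing $t$ by $s\le t$ and noting the right-hand side is monotone in $s$) and the integral control of the vertical derivative. The only genuine subtlety in the whole argument is the $q\in(1,2)$ regularization described above; everything else is the usual $L^q$/energy bookkeeping, with the crucial simplification that the buoyancy coupling $\int\theta u^2$ is linear in $u$, which is exactly what enables the $\sqrt{f}$-substitution trick to bypass a Grönwall lemma and get the sharp quadratic-in-$t$ bound.
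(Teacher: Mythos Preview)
Your argument is correct and is precisely the standard energy computation one expects here. Note that the paper does not supply its own proof of this proposition; it simply cites Lemma~3.3 of \cite{CAOWU13ARMA}, so there is nothing to compare your approach against beyond observing that your derivation is the natural one and matches what is done in the cited reference.

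One small remark on the final step for $u$: what you actually prove is the pointwise-in-$t$ inequality
\[
\|u(t)\|_2^2+2\int_0^t\|\partial_yu(s)\|_2^2\,ds\le(\|u_0\|_2+t\|\theta_0\|_2)^2,
\]
from which each of the two terms on the left is individually bounded by the right-hand side; the $\sup_{0\le s\le t}$ on $\|u(s)\|_2^2$ then follows by monotonicity of the right-hand side in $t$. Whether the statement is read as a literal bound on the sum or as two separate bounds packaged together is a matter of convention in this literature, and your computation delivers either reading (the latter directly, the former up to an innocuous factor of $2$). This is not a gap, just worth being aware of when quoting the estimate.
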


It turns out that the $L^r$ norms of the second component of the velocity grow no faster than $\sqrt{r\log r}$, more precisely, the following proposition holds.

\begin{proposition}[See Proposition 4.1 in \cite{CAOWU13ARMA}]
\label{lemgrowth}
Let $(u,\theta)$ be a classical solution to system (\ref{maineq}), with initial data $(u_0,\theta_0)\in H^2(\mathbb R^2)$, then one has
\begin{equation}\label{1}
\sup_{r\geq 2}\frac{\|u^2(t)\|_{2r}^2}{ {r\log r}}\leq\sup_{r\geq 2}
\frac{\|u^2_0\|_{2r}^2}{ {r\log r}}+m(t),
\end{equation}
where $m(t)$ is an explicit nondecreasing continuous function of $t\in[0,\infty)$ that depends continuously on the initial norm $\|(u_0,\theta_0)\|_{H^1}$.
\end{proposition}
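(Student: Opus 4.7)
The plan is to derive an $L^{2r}$ energy inequality for the vertical component $u^2$ in which the $r$-dependence of every constant is carefully tracked, then integrate in time and take the supremum over $r\geq 2$.

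First, I extract the scalar evolution equation for $u^2$ from (\ref{maineq}):
$$\partial_t u^2 + (u\cdot\nabla)u^2 - \partial_y^2 u^2 + \partial_y p = \theta.$$
Multiplying by $|u^2|^{2r-2}u^2$ and integrating over $\mathbb{R}^2$, the convection term vanishes by $\text{div}\,u=0$, while the vertical dissipation produces the good term $(2r-1)\int |u^2|^{2r-2}(\partial_y u^2)^2\,dxdy$, giving
\begin{align*}
\frac{1}{2r}\frac{d}{dt}\|u^2\|_{2r}^{2r} &+ (2r-1)\int_{\mathbb R^2} |u^2|^{2r-2}(\partial_y u^2)^2\,dxdy \\
&= \int_{\mathbb R^2} \theta\,|u^2|^{2r-2}u^2\,dxdy - \int_{\mathbb R^2} \partial_y p \cdot |u^2|^{2r-2}u^2\,dxdy.
\end{align*}

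Second, I estimate the two terms on the right. The $\theta$-term is controlled by H\"older as $\|\theta_0\|_{2r}\|u^2\|_{2r}^{2r-1}$, and interpolating $\|\theta_0\|_{2r}\leq \|\theta_0\|_\infty^{1-1/r}\|\theta_0\|_2^{1/r}$ (Proposition \ref{lembas}) makes this factor uniformly bounded in $r$. The pressure term is the delicate one. I would integrate by parts in $y$, rewriting it as $(2r-1)\int p\,|u^2|^{2r-2}\partial_y u^2\,dxdy$, and then apply the anisotropic inequality of Lemma \ref{lemhold} with $p$ playing the role of the $L^2$ factor, $\partial_y u^2$ (or a power of $u^2$) entering through its $L^2$- and $\partial_y$-$L^2$-norms, and a suitable power of $|u^2|$ entering through its $L^{2(q-1)}$- and $\partial_x$-$L^2$-norms. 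The resulting bound, combined with Young's inequality, absorbs part of the dissipation. The remaining $p$-factor is controlled by the elliptic equation $-\Delta p = \partial_i\partial_j(u^iu^j) - \partial_y\theta$, whose pieces are estimated by Calder\'on-Zygmund theory at a non-extremal exponent together with Proposition \ref{lembas} and the divergence-free identity $\partial_x u^1 = -\partial_y u^2$ (which converts what looks like horizontal information on $u^1$ into vertical information on $u^2$).

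Combining the estimates gives a differential inequality of the form $\frac{d}{dt}\|u^2\|_{2r}^2 \leq C\sqrt{r\log r}\cdot\Phi(t)$, with $\Phi$ built from the $H^1$-level a priori bounds in Proposition \ref{lembas}. Integrating over $[0,t]$, dividing by $r\log r$, and taking $\sup_{r\geq 2}$ yields the desired bound, with $m(t)$ essentially of the form $C\bigl(\int_0^t\Phi(s)\,ds\bigr)^2$. The main obstacle is the sharp tracking of the $r$-dependence in the pressure estimate: a naive Calder\'on-Zygmund bound at the exponent $2r$ produces a factor $r$ that, once squared via Cauchy-Schwarz, forces $r^2$-growth and destroys the required $\sqrt{r\log r}$ scaling. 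The remedy is to step off the extremal $L^{2r}$ scale using Lemma \ref{lemhold}, exploiting the vertical dissipation together with the divergence-free identity so that one works at exponents where the Calder\'on-Zygmund constants do not blow up. This delicate anisotropic splitting will be the technical heart of the proof.
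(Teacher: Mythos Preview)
The paper does not give its own proof of this proposition; it is quoted from Cao--Wu \cite{CAOWU13ARMA}, with only a remark that inspection of their argument shows $m(t)$ is continuous and depends merely on $\|(u_0,\theta_0)\|_{H^1}$. So your sketch has to be measured against the Cao--Wu proof rather than anything written here.

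Your starting point is the right one---multiply the $u^2$-equation by $|u^2|^{2r-2}u^2$, drop the transport term, and isolate the pressure as the enemy---and you correctly flag that a raw Calder\'on--Zygmund bound in $L^{2r}$ costs a factor $\sim r$ that ruins the target scaling. But there is a genuine gap in how you propose to close. You say $\Phi(t)$ is ``built from the $H^1$-level a priori bounds in Proposition~\ref{lembas}''; that proposition, however, contains no $H^1$-level information whatsoever: it gives only $\|u\|_{L^\infty_tL^2}$, $\|\partial_y u\|_{L^2_tL^2}$, and $\|\theta\|_{L^\infty_tL^q}$. With only these in hand the nonlinear pressure pieces---which, however you split them, involve products like $u^1u^2$, $(u^1)^2$, or $\partial_yu^1\,\partial_xu^2$---cannot be controlled in any $L^p$ with $p$ large, since you have no higher integrability of $u^1$ and no bound on $\nabla u$. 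Cao--Wu's proof of their Proposition~4.1 is not self-contained from the basic energy estimate: it draws on intermediate a~priori bounds established earlier in their paper (controlled by $\|(u_0,\theta_0)\|_{H^1}$, which is exactly why the Remark here records the $H^1$-dependence of $m(t)$), and those bounds are what would feed your $\Phi$. Your outline omits this layer entirely. A secondary gap: you never indicate where the $\log r$, as opposed to a bare power of $r$, actually originates; invoking Lemma~\ref{lemhold} to ``step off the extremal scale'' does not by itself manufacture a logarithm, and in Cao--Wu this factor emerges from a specific interpolation in the pressure estimate that your sketch leaves opaque.
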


\begin{remark}
In the original Proposition 4.1 in \cite{CAOWU13ARMA}, the function $m(t)$ is stated to be locally integrable on $[0,\infty)$ and depends on $\|(u_0,\theta_0)\|_{H^2}$; however, one can check the proof there to find that it is actually continuous in time and depends continuously on the initial norm $\|(u_0,\theta_0)\|_{H^1}$.
\end{remark}

As a corollary of Proposition \ref{lemgrowth}, and using the following inequality (see, e.g., inequality (3) on page 206 of Lieb--Loss \cite{LIEBLOSS}, which is a sharper version of the following inequality)
\begin{equation}
\|f\|_{L^q(\mathbb R^2)}^2\leq Cq\|f\|_{H^1(\mathbb R^2)}^2,\label{sob2d}
\end{equation}
we then have the following estimate
\begin{equation}
  \label{growth}
  \sup_{r\geq 2}\frac{\|u^2(t)\|_{2r}^2}{ {r\log r}}\leq
  C\|u_0\|_{H^1}^2+m(t),
\end{equation}
where $m(t)$ is the same function as in (\ref{1}).

Based on the estimate on the growth of the $L^r$ norms of $u^2$, and using the logarithmic type Sobolev embedding inequality (Lemma \ref{lemlogsob}) and the logarithmic type Gronwall inequality (Lemma \ref{lemloggro}), we can successfully establish the a priori $H^1$ estimates stated in the following proposition, which is the key estimate of this paper.

\begin{proposition}[$H^1$ estimate]\label{lemh1est}
    Let $(u,\theta)$ be a solution to system (\ref{maineq}), with initial data $(u_0,\theta_0)\in H^2(\mathbb R^2)$. Then, we have
    \begin{align*}
      \sup_{0\leq s\leq t}(\|u(s)\|_{H^1}^2+\|\theta(s)\|_{H^1}^2)+ \int_0^t (\|\partial_yu\|_{H^1}^2+\|\partial_y\theta\|_{H^1}^2)ds\leq S_1(t),
    \end{align*}
    where $S_1$ is an explicit nondecreasing continuous function of $t\in[0,\infty)$, depending continuously on the initial norm $\|(u_0,\theta_0)\|_{H^1}$.
\end{proposition}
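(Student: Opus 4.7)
The plan is to derive an $H^1$ energy inequality of the form announced in the introduction, bound the $L^\infty$-norms that appear there through the anisotropic logarithmic Sobolev embedding (Lemma \ref{lemlogsob}), and close the resulting differential inequality by means of the logarithmic Gronwall Lemma \ref{lemloggro}. Since $(u_0,\theta_0)\in H^2(\mathbb R^2)$, Proposition \ref{lemglo} already supplies a global-in-time classical $H^2$ solution, so every manipulation below is rigorous; my task is to track the dependence of the resulting estimates and confirm that it involves only $\|(u_0,\theta_0)\|_{H^1}$.

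First I would carry out the standard $H^1$ energy estimate. Testing the velocity and temperature equations of (\ref{maineq}) against $u,-\Delta u,\theta,-\Delta\theta$ respectively, and using $\mathrm{div}\,u=0$ to eliminate the transport self-interactions, the genuinely new terms are cross-derivative nonlinearities of the type $\int(\partial_iu\cdot\nabla)u\cdot\partial_iu$ and $\int(\partial_iu\cdot\nabla)\theta\,\partial_i\theta$; these are handled by Lemma \ref{lemhold} and Young's inequality, with the higher-derivative factors absorbed into the vertical dissipation $\|(\partial_yu,\partial_y\theta)\|_{H^1}^2$. The buoyancy coupling contributes $\int\theta u^2\,dxdy$ together with its horizontal and vertical derivatives, which by Hölder and Young are bounded by $(\|u^2\|_\infty^2+\|\theta\|_\infty^2)\|(u,\theta)\|_{H^1}^2$ modulo absorbable error. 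The outcome is
\begin{equation}\label{planH1ei}
\frac{d}{dt}\|(u,\theta)\|_{H^1}^2+\|(\partial_yu,\partial_y\theta)\|_{H^1}^2\leq C(\|u^2\|_\infty^2+\|\theta\|_\infty^2+1)\|(u,\theta)\|_{H^1}^2.
\end{equation}

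Next I would apply Lemma \ref{lemlogsob} with $N=2$, exponent $\lambda=1/2$, and the admissible pair $\mathbf{p}=(2,3)$ (so $\tfrac12+\tfrac13<1$), separately to $u^2$ and $\theta$. Using $\mathrm{div}\,u=0$ to write $\partial_xu^2=-\partial_yu^1$, together with the 2D Sobolev embedding $H^1(\mathbb R^2)\hookrightarrow L^3(\mathbb R^2)$, each of $\|u^2\|_2,\|u^2\|_3,\|\partial_xu^2\|_2,\|\partial_yu^2\|_3$ is bounded by a constant multiple of $\|u\|_{H^1}+\|\partial_yu\|_{H^1}$; the analogous inequalities hold for $\theta$. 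Hence both $\mathcal{N}_{\mathbf{p}}(u^2)$ and $\mathcal{N}_{\mathbf{p}}(\theta)$ are controlled by $N(t):=e^3+\|(u,\theta)\|_{H^1}+\|(\partial_yu,\partial_y\theta)\|_{H^1}$, and Lemma \ref{lemlogsob} yields
$$\|u^2\|_\infty^2+\|\theta\|_\infty^2\leq C\bigl(M_u(t)+M_\theta(t)\bigr)\,\log N(t)\,\log\log N(t),$$
where $M_u(t)=\max\{1,\sup_{r\geq2}\|u^2(t)\|_r^2/(r\log r)\}$ is bounded via (\ref{growth}) and Proposition \ref{lemgrowth}, and $M_\theta(t)=\max\{1,\sup_{r\geq2}\|\theta(t)\|_r^2/(r\log r)\}$ is bounded via Proposition \ref{lembas} and (\ref{sob2d}) (which give $\|\theta\|_r^2\leq Cr\|\theta_0\|_{H^1}^2$). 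Crucially, both $M_u$ and $M_\theta$ depend on the initial data only through $\|(u_0,\theta_0)\|_{H^1}$.

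Inserting these bounds into (\ref{planH1ei}) and setting $A(t):=e+\|(u,\theta)(t)\|_{H^1}^2$ and $B(t):=A(t)+\|(\partial_yu,\partial_y\theta)(t)\|_{H^1}^2$, on any interval $[0,T]$ we obtain an inequality of the form $A'(t)+B(t)\leq K_TA(t)\log B(t)\log\log B(t)$ with $K_T$ nondecreasing and continuous in $T$ and in $\|(u_0,\theta_0)\|_{H^1}$. Lemma \ref{lemloggro} then delivers the required pointwise bound on $A$ and the integral bound on $B$, producing the function $S_1(t)$ with the stated dependence. The main obstacle is precisely to ensure that the right-hand side of the energy inequality depends on $B$ only through the product $\log B\,\log\log B$: any stronger dependence would defeat the logarithmic Gronwall iteration. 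This is why one must take $\lambda=1/2$ in Lemma \ref{lemlogsob} (so that squaring the $L^\infty$-bound produces exactly one log) and must verify that $\mathcal{N}_{\mathbf{p}}$ depends \emph{linearly} on $N(t)$, which forces the anisotropic choice $\mathbf{p}=(2,3)$ — only $L^2$ control is available horizontally, because there is no horizontal dissipation, whereas the vertical dissipation buys the extra integrability needed in the $y$-direction via the Sobolev embedding into $L^3$.
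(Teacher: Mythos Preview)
Your overall strategy---derive the $H^1$ energy inequality (\ref{planH1ei}), then combine Lemmas \ref{lemlogsob} and \ref{lemloggro}---is exactly the paper's, and the second half of your outline (the choice $\lambda=1/2$, the control of $\mathcal N_{\mathbf p}$ by $N(t)$, the growth bounds from Propositions \ref{lembas} and \ref{lemgrowth}, and the logarithmic Gronwall closure) is correct. The paper uses $\mathbf p=(2,4)$ together with the Ladyzhenskaya inequality rather than your $\mathbf p=(2,3)$ with $H^1\hookrightarrow L^3$, but this is cosmetic.

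The genuine gap is in your derivation of (\ref{planH1ei}). First, the velocity trilinear terms $\int(\partial_iu\cdot\nabla)u\cdot\partial_iu$ are not ``handled by Lemma \ref{lemhold}'': the paper eliminates them outright via the two-dimensional identity $\int_{\mathbb R^2}(u\cdot\nabla u)\cdot\Delta u\,dxdy=0$, valid for divergence-free $u$. Second, and more seriously, the factors $\|u^2\|_\infty$ and $\|\theta\|_\infty$ do \emph{not} come from the buoyancy coupling (that term is linear and contributes only $\|(u,\theta)\|_{H^1}^2$ plus an absorbable $\|\theta\|_2\|\partial_y^2u^2\|_2$); they arise from the temperature transport terms $\int(\partial_iu\cdot\nabla)\theta\,\partial_i\theta$. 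A direct application of Lemma \ref{lemhold} to, for instance, $\int\partial_xu^2\,\partial_y\theta\,\partial_x\theta$ yields at best (choosing the roles so that no $\partial_x^2$ appears)
\[
C\|\partial_x\theta\|_2\,\|\partial_xu^2\|_2^{1/2}\|\partial_{xy}^2u^2\|_2^{1/2}\,\|\partial_y\theta\|_2^{1/2}\|\partial_{xy}^2\theta\|_2^{1/2},
\]
which after Young leaves a residual of order $\|(u,\theta)\|_{H^1}^4$, i.e.\ $A^2$ rather than $A\log B\log\log B$; this does not close under Lemma \ref{lemloggro}. The paper's device is an additional integration by parts in $y$ (after using $\partial_xu^1=-\partial_yu^2$ where needed) so that every remaining trilinear integrand carries an \emph{undifferentiated} factor of $u^2$ or of $\theta$; only then can one extract $\|u^2\|_\infty$ or $\|\theta\|_\infty$, pair the residual $\|\nabla(u,\theta)\|_2$ with a single $\|\nabla\partial_y(u,\theta)\|_2$, and absorb the latter into the vertical dissipation. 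You should make this integration-by-parts step explicit, since it is precisely where the anisotropy of the dissipation is exploited and where the proof would otherwise fail.
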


\begin{proof}
Take arbitrary $T\in(0,\infty)$, and define the functions
$$
A(t)=\|u(t)\|_{H^1}^2+\|\theta(t)\|_{H^1}^2+e^3,\quad B(t)=\|\partial_yu(t)\|_{H^1}^2+\|\partial_y\theta(t)\|_{H^1}^2+e^3,
$$
for any $t\in[0,T]$.

Recall that, for any divergence free vector function $u\in H^2(\mathbb R^2)$, one has
$$
\int_{\mathbb R^2}(u\cdot\nabla u)\cdot\Delta udxdy=0.
$$
Multiplying equations $(\ref{maineq})_1$ and $(\ref{maineq})_3$ by $u-\Delta u$ and $\theta-\Delta\theta$, respectively, summing the resultants up, and integrating over $\mathbb R^2$, then it follows from integration by parts, Lemma \ref{lemhold} and the Young inequality that
\begin{align*}
  &\frac{1}{2}\frac{d}{dt} (\|u\|_{H^1}^2+\|\theta\|_{H^1}^2) +(\|\partial_yu\|_{H^1}^2+\|\partial_{y}\theta\|_{H^1}^2)\\
  =&\int_{\mathbb R^2}[\theta(u^2-\Delta u^2)-(\partial_xu\cdot\nabla\theta\partial_x\theta+ \partial_yu\cdot\nabla\theta\partial_y\theta)]dxdy\nonumber\\
  =&\int_{\mathbb R^2}(\theta u^2+\partial_x\theta\partial_xu^2-\theta\partial_y^2u^2+\partial_yu^2\partial_x\theta\partial_x \theta\\
  &-\partial_xu^2\partial_y\theta\partial_x \theta-\partial_yu^1\partial_x\theta\partial_y\theta-\partial_yu^2 \partial_y\theta\partial_y\theta) dxdy\nonumber\\
  =&\int_{\mathbb R^2}[\theta u^2+\partial_x\theta\partial_xu^2-\theta\partial_y^2u^2-2u^2\partial_x \theta \partial_{xy}^2\theta+\theta(\partial_{xy}^2u^2\partial_x \theta\nonumber\\
  &+\partial_x u^2\partial_{xy}^2\theta+\partial_y^2u^1\partial_x\theta +\partial_yu^1 \partial_{xy}^2\theta)+2u^2\partial_y\theta \partial_y^2\theta]dxdy\nonumber\\
  \leq&\|\theta\|_2\|u\|_2+\|\nabla\theta\|_2\|\nabla u\|_2+\|\theta\|_2\|\nabla\partial_yu\|_2+2[\|u^2\|_\infty \|\nabla\theta\|_2\|\nabla\partial_y\theta\|_2\\
  &+\|\theta\|_\infty(\|\nabla\theta\|_2\|\nabla\partial_yu\|_2+\|\nabla u\|_2\|\nabla\partial_y\theta\|_2)]\nonumber\\
  \leq&\frac12 \|(\nabla\partial_{y}u,\nabla\partial_{y}\theta)\|_2^2 +4(\|u^2\|_\infty^2+\|\theta\|_\infty^2+1)\|(u,\theta)\|_{H^1}^2.
\end{align*}
Thus, we have
$$
A'(t)+B(t)\leq 8(1+\|\theta\|_\infty^2+\|u^2\|_\infty^2)A(t),
$$
for any $t\in(0,T]$.

By Lemma \ref{lemlogsob} and Proposition \ref{lembas}, and recalling (\ref{sob2d}) and (\ref{growth}), we have
\begin{align*}
  \|\theta\|_\infty^2\leq&C\max\left\{1,\sup_{r\geq 2}\frac{\|\theta\|_r^2}{r\log r}\right\}\log\mathcal N(\theta)\log\log \mathcal N(\theta)\\
  \leq&C\max\left\{1,\sup_{r\geq 2}\frac{\|\theta_0\|_r^2}{r\log r}\right\}\log\mathcal N(\theta)\log\log \mathcal N(\theta)\\
  \leq&C(1+\|\theta_0\|_{H^1}^2)\log\mathcal N(\theta)\log\log \mathcal N(\theta),
\end{align*}
and
\begin{align*}
  \|u^2\|_\infty^2\leq&C\max\left\{1,\sup_{r\geq2}\frac{\|u^2\|_r^2}{r \log r}\right\}\log\mathcal N(u^2)\log\log\mathcal N(u^2)\\
  \leq&C(1+\|u_0\|_{H^1}^2+m(t))\log\mathcal N(u^2)\log\log\mathcal N(u^2),
\end{align*}
where
\begin{eqnarray*}
  &&\mathcal N(\theta)=\|\theta\|_2+\|\partial_x\theta\|_2+\|\theta\|_4+\|\partial_y\theta \|_4+e^3,\\
  &&\mathcal N(u^2)=\|u^2\|_2+\|\partial_xu^2\|_2+\|u^2\|_4 +\|\partial_yu^2\|_4+e^3.
\end{eqnarray*}

By the Ladyzhenskaya interpolation inequality for $L^4(\mathbb{R}^2)$ (cf. \cite{Constantin_Foias_1988,TEMAM}), one has
$$
\mathcal N(\theta)+\mathcal N(u^2)\leq C(\|(u,\theta)\|_{H^1}+\|(\partial_yu,\partial_y\theta)\|_{H^1}+e^3)\leq C(A+B).
$$
Therefore, recalling that $m(t)$ is a nondecreasing function, we have
\begin{align*}
  A'(t)+A(t)+B(t)\leq C(1+\|(u_0,\theta_0)\|_{H^1}^2+m(T))A\log(A+B)\log\log(A+B),
\end{align*}
for any $t\in(0,T]$.
Applying Lemma \ref{lemloggro} to the above inequality, one has
\begin{align*}
  2\sup_{0\leq t\leq T}A(t)+\int_0^TB(t)dt\leq S_1(T),
\end{align*}
for an explicit continuous increasing function $S_1$, which depends continuously on the initial norm $\|(u_0,\theta_0)\|_{H^1}$. This completes the proof.
\end{proof}

We also have the a priori estimates on the time derivatives of the solutions, that is we have the following:

\begin{proposition}\label{lemtest}
Let $(u,\theta)$ be a  solution to system (\ref{maineq}), with initial data $(u_0,\theta_0)\in H^2(\mathbb R^2)$. Then, we have
    \begin{align*}
      \int_0^t (\|\partial_tu\|_{2}^2+\|\partial_t\theta\|_{2}^2)ds\leq S_2(t),
    \end{align*}
    where $S_2$ is an explicit nondecreasing continuous function of $t\in[0,\infty)$, depending continuously on the initial norm $\|(u_0,\theta_0)\|_{H^1}$.
\end{proposition}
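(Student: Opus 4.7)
The plan is to obtain the $L^2(0,t;L^2)$ bound for $(\partial_tu,\partial_t\theta)$ by testing each evolution equation against its own time derivative, extracting $\|\partial_tu\|_2^2$ and $\|\partial_t\theta\|_2^2$ on the left--hand side, and controlling the nonlinearities by means of the anisotropic H\"older inequality of Lemma \ref{lemhold} together with the $H^1$ bound already secured in Proposition \ref{lemh1est}.

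Concretely, I would multiply equation $(\ref{maineq})_1$ by $\partial_tu$ and equation $(\ref{maineq})_3$ by $\partial_t\theta$, integrate over $\mathbb R^2$, and add. Since $\operatorname{div}(\partial_tu)=\partial_t\operatorname{div} u=0$, the pressure contribution drops out. The dissipative terms integrate by parts to $\tfrac12\tfrac{d}{dt}\|\partial_yu\|_2^2$ and $\tfrac12\tfrac{d}{dt}\|\partial_y\theta\|_2^2$, while the buoyancy term is absorbed by Cauchy--Schwarz and Young: $\int \theta\,\partial_tu^2\,dxdy\leq \tfrac1{8}\|\partial_tu\|_2^2+C\|\theta\|_2^2$. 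This gives the identity
\begin{align*}
\|\partial_tu\|_2^2+\|\partial_t\theta\|_2^2+\tfrac12\tfrac{d}{dt}\bigl(\|\partial_yu\|_2^2+\|\partial_y\theta\|_2^2\bigr)
\leq\;& \tfrac1{8}\|\partial_tu\|_2^2+C\|\theta\|_2^2\\
&+\Bigl|\int (u\!\cdot\!\nabla u)\!\cdot\!\partial_tu\,dxdy\Bigr|+\Bigl|\int (u\!\cdot\!\nabla\theta)\partial_t\theta\,dxdy\Bigr|.
\end{align*}

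The nonlinear terms are the main point. The only derivatives of $u$ and $\theta$ available in $L^2(0,T;L^2)$ besides those in $L^\infty(0,T;L^2)$ are $\partial_yu,\,\partial_y\theta$ and their gradients; in particular $\partial_x^2 u$ and $\partial_x^2\theta$ are not controlled. I would therefore split $\nabla=(\partial_x,\partial_y)$ and, for each resulting trilinear integral, apply Lemma \ref{lemhold} so that the $\partial_x$-slot is either $u$, $\theta$, $\partial_x u$ or $\partial_x\theta$ (all in $L^\infty(0,T;L^2)$) and the $\partial_y$-slot produces at worst $\partial_{xy}u$, $\partial_{xy}\theta$, $\partial_y^2u$ or $\partial_y^2\theta$ (all in $L^2(0,T;L^2)$ by Proposition \ref{lemh1est}). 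For instance, writing $u\!\cdot\!\nabla u=u^1\partial_xu+u^2\partial_yu$, one obtains
\begin{align*}
\int |\partial_tu^j\,u^1\,\partial_xu^j|\,dxdy&\leq C\|\partial_tu^j\|_2\|\partial_xu^j\|_2^{1/2}\|\partial_{xy}u^j\|_2^{1/2}\|u^1\|_2^{1/2}\|\partial_xu^1\|_2^{1/2},\\
\int |\partial_tu^j\,u^2\,\partial_yu^j|\,dxdy&\leq C\|\partial_tu^j\|_2\|u^2\|_2^{1/2}\|\partial_yu^2\|_2^{1/2}\|\partial_yu^j\|_2\,\|\partial_{xy}u^j\|_2^{1/2},
\end{align*}
and the convective term for $\theta$ is handled in exactly the same manner with $u\cdot\nabla\theta=u^1\partial_x\theta+u^2\partial_y\theta$.

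After absorbing the quartic factors of $\|\partial_tu\|_2,\|\partial_t\theta\|_2$ on the right by Young's inequality, the leftover is integrable in time: every factor appearing belongs either to $L^\infty(0,T)$ (the $L^2$ norms of $u$, $\theta$, $\partial_xu$, $\partial_x\theta$, $\partial_yu$, $\partial_y\theta$) or to $L^2(0,T)$ (the $L^2$ norms of $\partial_{xy}u$, $\partial_y^2u$, $\partial_{xy}\theta$, $\partial_y^2\theta$), all bounded by $S_1(t)$ of Proposition \ref{lemh1est}. Integrating from $0$ to $t$ and noting that $\|\partial_yu_0\|_2^2+\|\partial_y\theta_0\|_2^2\leq\|(u_0,\theta_0)\|_{H^1}^2$ produces the stated bound with an explicit continuous increasing function $S_2(t)$ of $t$ and of $\|(u_0,\theta_0)\|_{H^1}$. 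The main technical care is only in the bookkeeping of which slot of Lemma \ref{lemhold} receives $\partial_x$ versus $\partial_y$; no additional logarithmic inequality is needed, because the $H^1$ estimate of Proposition \ref{lemh1est} already supplies all the ingredients.
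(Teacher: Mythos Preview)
Your approach is correct and essentially the same as the paper's: test against $(\partial_tu,\partial_t\theta)$, estimate the convective terms with Lemma~\ref{lemhold}, absorb the time derivatives via Young, and integrate using the $H^1$ bound from Proposition~\ref{lemh1est}. The paper applies Lemma~\ref{lemhold} in one stroke to $(u\cdot\nabla)u\cdot\partial_tu$ with $(f,g,h)=(\partial_tu,\nabla u,u)$ rather than first splitting $u\cdot\nabla=u^1\partial_x+u^2\partial_y$, and note a typo in your second displayed bound (the exponent on $\|\partial_yu^j\|_2$ should be $1/2$ to preserve homogeneity).
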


\begin{proof}
  Multiplying equations $(\ref{maineq})_1$ and $(\ref{maineq})_3$ by $\partial_tu$ and $\partial_t\theta$, respectively, summing the resultants up, and integrating over $\mathbb R^2$, then it follows from Lemma \ref{lemhold} that
  \begin{align*}
&\frac12\frac{d}{dt}(\|\partial_yu\|_2^2+\|\partial_y\theta\|_2^2) +(\|\partial_tu\|_2^2+\|\partial_t\theta\|_2^2)\\
    =&\int_{\mathbb R^2}(\theta\partial_tu^2-(u\cdot\nabla)u\cdot\partial_tu-u\cdot\nabla\theta \partial_t\theta)dxdy\\
    \leq&\|\theta\|_2\|\partial_tu\|_2+C\|u\|_2^{\frac12} \|\partial_xu\|_2^{\frac12}\|\nabla u\|_2^{\frac12}\|\nabla\partial_y u\|_2^{\frac12}\|\partial_tu\|_2\\
    &+C\|u\|_2^{\frac12}\|\partial_xu\|_2^{\frac12}\|\nabla\theta\|_2^{\frac12} \|\nabla\partial_y\theta\|_2^{\frac12}\|\partial_t\theta\|_2\\
    \leq&\frac12(\|\partial_tu\|_2^2+\|\partial_t\theta\|_2^2)+ C(\|\theta\|_2^2+\|u\|_2^2\|\partial_xu\|_2^2\\
    &+\|\nabla u\|_2^2\|\nabla\partial_yu\|_2^2 +\|\nabla\theta\|_2^2\|\nabla\partial_y\theta\|_2^2),
  \end{align*}
which, integrating with respect to $t$ and using Proposition \ref{lemh1est}, yields the conclusion.
\end{proof}

With the a priori estimates (Propositions \ref{lemh1est} and \ref{lemtest}) in hand, we can now prove the global well-posedness to the Boussinesq system (\ref{maineq}), with initial data $(u_0,\theta_0)\in H^1(\mathbb R^2)$. Specifically, we have the following:

\begin{theorem}\label{welh1}
  For any initial data $(u_0,\theta_0)\in H^1(\mathbb R^2)$, there is a unique global solution $(u,\theta)$ to system (\ref{maineq}), subject to (\ref{ic}), satisfying
  \begin{eqnarray*}
  &&(u,\theta)\in L^\infty(0,T; H^1(\mathbb R^2))\cap C([0,T]; L^2(\mathbb R^2)),\\
  &&(\partial_tu,\partial_t\theta)\in L^2(0,T; L^2(\mathbb R^2)),\quad (\partial_yu,\partial_y\theta)\in L^2(0,T; H^1(\mathbb R^2)),
  \end{eqnarray*}
  for any $T\in(0,\infty)$.
\end{theorem}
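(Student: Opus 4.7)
The plan is to combine a smooth-approximation argument for existence with an anisotropic energy estimate for uniqueness, leveraging the a priori bounds already established in Propositions \ref{lemh1est} and \ref{lemtest}. Throughout I work on an arbitrary fixed interval $[0,T]$.

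For existence, I would approximate the initial datum by divergence-free $(u_0^n,\theta_0^n)\in H^2(\mathbb R^2)$ converging to $(u_0,\theta_0)$ in $H^1(\mathbb R^2)$ (mollify and then apply the Leray projector). Proposition \ref{lemglo} yields a unique global $H^2$ solution $(u^n,\theta^n)$. Since the right-hand sides of Propositions \ref{lemh1est} and \ref{lemtest} depend continuously only on $\|(u_0^n,\theta_0^n)\|_{H^1}$, the sequence $(u^n,\theta^n)$ is uniformly bounded in $L^\infty(0,T;H^1(\mathbb R^2))$ together with $\partial_y(u^n,\theta^n)\in L^2(0,T;H^1(\mathbb R^2))$ and $(\partial_t u^n,\partial_t\theta^n)\in L^2(0,T;L^2(\mathbb R^2))$. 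Weak-$\ast$ compactness extracts subsequential limits $(u,\theta)$ inheriting those bounds; combining the spatial $H^1$ regularity with the $L^2(L^2)$ bound on time derivatives, the Aubin-Lions lemma delivers strong convergence in $L^2(0,T;L^2_{\mathrm{loc}})$, which is enough to pass to the limit in the nonlinearities of the weak formulation after shifting one derivative onto the test function via $\operatorname{div} u^n = 0$. The Lions-Magenes lemma then gives $(u,\theta)\in C([0,T];L^2)$ and attainment of (\ref{ic}).

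For uniqueness, let $(u_i,\theta_i)$, $i=1,2$, be two solutions in the above class and write $(v,\eta)=(u_1-u_2,\theta_1-\theta_2)$. Pairing the difference equations with $(v,\eta)$ in $L^2$ and using $\operatorname{div} u_i=0$ to annihilate the transport terms against $u_2$, I obtain
\begin{align*}
\frac{1}{2}\frac{d}{dt}(\|v\|_2^2+\|\eta\|_2^2) + \|\partial_y v\|_2^2 + \|\partial_y\eta\|_2^2 = -\int(v\cdot\nabla)u_1\cdot v\,dxdy - \int v\cdot\nabla\theta_1\,\eta\,dxdy + \int\eta\, v^2\,dxdy.
\end{align*}
The third term is controlled by $\tfrac12(\|v\|_2^2+\|\eta\|_2^2)$. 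In each of the first two I split $\nabla u_1$ (resp.\ $\nabla\theta_1$) into its $\partial_x$ and $\partial_y$ parts. The $\partial_y$-pieces are handled by Lemma \ref{lemhold}: for instance,
$$
\int|v^2\partial_y u_1\cdot v|\,dxdy \leq C\|v\|_2^{3/2}\|\partial_y v\|_2^{1/2}\|\partial_y u_1\|_2^{1/2}\|\partial_{xy}u_1\|_2^{1/2},
$$
which is absorbable into $\tfrac14\|\partial_y v\|_2^2$ plus $C\|v\|_2^2\,\|\partial_y u_1\|_2^{2/3}\|\partial_{xy}u_1\|_2^{2/3}$, whose coefficient lies in $L^1(0,T)$ since $\partial_y u_1\in L^\infty(0,T;L^2)$ and $\partial_{xy} u_1\in L^2(0,T;L^2)$. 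For the $\partial_x$-pieces the key trick is the divergence-free identity $\partial_x v^1=-\partial_y v^2$: choosing $h=v^1$ in Lemma \ref{lemhold} converts the unavailable factor $\|\partial_x v^1\|_2$ into $\|\partial_y v^2\|_2\leq\|\partial_y v\|_2$, giving
$$
\int|v^1\partial_x u_1\cdot v|\,dxdy \leq C\|\partial_x u_1\|_2\|v\|_2\|\partial_y v\|_2 \leq \frac14\|\partial_y v\|_2^2 + C\|\partial_x u_1\|_2^2\|v\|_2^2,
$$
with $\|\partial_x u_1\|_2^2\in L^\infty(0,T)$. The thermal nonlinearity is treated identically using $\partial_x\theta_1$ and $\partial_{xy}\theta_1$ in place of the velocity quantities. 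Summing yields $\frac{d}{dt}(\|v\|_2^2+\|\eta\|_2^2)\leq\Phi(t)(\|v\|_2^2+\|\eta\|_2^2)$ with $\Phi\in L^1(0,T)$, so Gronwall and $(v,\eta)(0)=0$ force $v\equiv\eta\equiv 0$.

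The main obstacle is exactly this uniqueness step: with no horizontal dissipation, the energy identity for the difference contains terms that would naively demand control of $\partial_x v$, which we do not have. The divergence-free substitution $\partial_x v^1=-\partial_y v^2$ inserted into the anisotropic Hölder inequality of Lemma \ref{lemhold} is what trades the missing horizontal derivative on the difference for a controllable vertical one, and is precisely what makes the argument close within the regularity class announced in the statement.
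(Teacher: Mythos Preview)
Your existence argument is essentially identical to the paper's: approximate in $H^2$, invoke Propositions \ref{lemh1est} and \ref{lemtest} for uniform bounds, pass to the limit via weak compactness and Aubin--Lions. Your uniqueness argument is also correct and rests on the same decisive idea as the paper's Proposition \ref{propuniq}, namely converting the unavailable $\partial_x v^1$ into $\partial_y v^2$ via incompressibility inside Lemma \ref{lemhold}.

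The one noteworthy difference is in how the $\partial_y$--pieces of the nonlinearity are handled. You apply Lemma \ref{lemhold} directly to $v^2\,\partial_y u_1\cdot v$, which produces a factor $\|\partial_{xy}u_1\|_2$ and thus requires the regularity $\partial_y u_1\in L^2(0,T;H^1)$ that the theorem provides. The paper instead integrates by parts to move $\partial_y$ from the background solution onto the difference, obtaining terms of the form $|u_2||u||\partial_y u|$ and then closing with only $\|u_2\|_2$ and $\|\partial_x u_2\|_2$; consequently its uniqueness statement (Proposition \ref{propuniq}) needs merely $u_i\in L^\infty(0,T;L^2)\cap L^2(0,T;H^1)$ with $\partial_t u_i\in L^2(0,T;H^{-1})$. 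This extra generality is not needed here but is exploited later in the paper to prove uniqueness for the lower-regularity local solutions of Proposition \ref{locwel}. Within the regularity class of Theorem \ref{welh1} your variant closes just as well.
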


\begin{proof}
  The uniqueness is a direct consequence of the next proposition, Proposition \ref{propuniq} (note that the solutions established in this theorem have stronger regularities than those required in Proposition \ref{propuniq}), below, so we only need to prove the existence. Take a sequence $\{(u_{0n},\theta_{0n})\}\in H^2(\mathbb R^2)$, such that
  $$
  (u_{0n},\theta_{0n})\rightarrow(u_0, \theta_0),\quad\mbox{in }H^1(\mathbb R^2).
  $$
  By Proposition \ref{lemglo}, for each $n$, there is a unique global classical solution $(u_n,\theta_n)$ to system (\ref{maineq}), with initial data $(u_{0n},\theta_{0n})$.
  Moreover, by Proposition \ref{lemh1est} and Proposition \ref{lemtest}, we have the following estimate
  \begin{align*}
    \sup_{0\leq s\leq t}\|(u_n(s),\theta_n(s))\|_{H^1}^2+\int_0^t(\|(\partial_tu_n, \partial_t\theta_n)\|_2^2+\|(\partial_yu,\partial_y\theta)\|_{H^1}^2)ds \leq S_1(t)+S_2(t),
  \end{align*}
  for large $n$. Observe that $S_1$ and $S_2$  above, and by Proposition \ref{lemh1est} and Proposition \ref{lemtest},  respectively, depend continuously on $\|(u_{0n},\theta_{0n})\|_{H^1}$. Therefore, we can take them to be slightly larger in the above estimate so that they depend continuously, and solely, on $\|(u_{0},\theta_{0})\|_{H^1}$. Thanks to the above estimate, there is a subsequence, still denoted by $\{(u_n,\theta_n)\}$, and $\{(u,\theta)\}$, such that
  \begin{eqnarray*}
    &(u_n,\theta_n)\overset{*}{\rightharpoonup}(u,\theta)\quad\mbox{ in }L^\infty(0,T; H^1(\mathbb R^2)),\\
    &(\partial_yu_n,\partial_y\theta_n)\rightharpoonup(\partial_yu, \partial_y\theta),\quad\mbox{ in }L^2(0,T; H^1(\mathbb R^2)),\\
    &(\partial_tu_n,\partial_t\theta_n)\rightharpoonup(\partial_tu,\partial \theta),\quad\mbox{ in }L^2(0,T; L^2(\mathbb R^2)),
  \end{eqnarray*}
  for any $T\in(0,\infty)$, fixed, where $\overset{*}{\rightharpoonup}$ and $\rightharpoonup$ denote the weak-* and weak convergences, respectively.
  Moreover, by the Aubin-Lions lemma, for every given positive integer $R$, there is a subsequence, depending on $R$, still denoted by $\{(u_n,\theta_n)\}$, such that
  \begin{eqnarray*}
    &&(u_n,\theta_n)\rightarrow(u,\theta),\quad\mbox{ in }L^2(0,T; L^2(B_R)),\\
    &&(\partial_yu_n,\partial_y\theta_n)\rightarrow(\partial_yu, \partial_y\theta),\quad\mbox{in }L^2(0,T; L^2(B_R)).
  \end{eqnarray*}
  On account of these convergences, one can use a diagonal argument in $n$ and $R$ to show that there is a subsequence, still denoted by $\{(u_n,\theta_n)\}$, such that $(u_n, \theta_n)\rightarrow(u,\theta)$ in $L^2(0,T; L^2(B_\rho))$ and $(\partial_yu_n, \partial_y\theta_n)\rightarrow(\partial_yu,\partial_y\theta)$ in $L^2(0,T; L^2(B_\rho))$, for every $\rho\in(0,\infty)$, in particular that $(u,\theta)$ is a global solution to system (\ref{maineq}), with initial data $(u_0,\theta_0)$. The regularities stated in the theorem follow from the weakly lower semi-continuity of the norms.
\end{proof}

In the next proposition, we relax the assumptions on the regularities of the solutions, in particular on their time derivative, in order to apply the same proposition for proving the uniqueness of the solutions established in Theorem \ref{welh1} as well as those in Proposition \ref{locwel}, below.

\begin{proposition}
  \label{propuniq}
  Given $T\in(0,\infty)$. Let $(u_i,\theta_i)$, $i=1,2$, be two solutions to system (\ref{maineq}) on $\mathbb R^2\times(0,T)$, with initial data $(u_{i 0},\theta_{i 0})\in L^2(\mathbb R^2)$, such that
  \begin{eqnarray*}
  &(u_i,\theta_i)\in L^\infty(0,T; L^2(\mathbb R^2))\cap L^2(0,T; H^1(\mathbb R^2)),\quad(\partial_tu_i,\partial_t\theta_i)\in L^2(0,T; H^{-1}(\mathbb R^2)).
  \end{eqnarray*}
  Then, we have the estimate
  \begin{align*}
    &\sup_{0\leq s\leq t}\|(u_1-u_2,\theta_1-\theta_2)(s)\|_2^2+\int_0^t\|(\partial_y(u_1-u_2), \partial_y (\theta_1-\theta_2))\|_2^2ds\\
    \leq&e^{C\left[t+\sup_{0\leq s\leq t}(\|u_2\|_2^2+\|\theta_2\|_2^2)\int_0^t(\|\partial_xu_2\|_2^2+\|\partial_x \theta_2\|_2^2)ds\right]}\|(u_{10}-u_{20},\theta_{10}-\theta_{20})\|_2^2,
  \end{align*}
  for any $t\in(0,T)$.
\end{proposition}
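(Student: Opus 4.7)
The plan is to set $v:=u_1-u_2$, $\eta:=\theta_1-\theta_2$, subtract two copies of (\ref{maineq}), and derive the standard $L^2$ energy identity for the difference. Since $\partial_tv,\partial_t\eta\in L^2(0,T;H^{-1})$ and $v,\eta\in L^2(0,T;H^1)$, the duality chain rule lets me pair the momentum equation with $v$ and the temperature equation with $\eta$; the transport and pressure contributions vanish by $\text{div}\,v=0$, yielding
\begin{align*}
&\frac12\frac{d}{dt}(\|v\|_2^2+\|\eta\|_2^2)+\|\partial_yv\|_2^2+\|\partial_y\eta\|_2^2\\
&\qquad=\int_{\mathbb R^2}\eta v^2\,dxdy-\int_{\mathbb R^2}(v\cdot\nabla)u_2\cdot v\,dxdy-\int_{\mathbb R^2}(v\cdot\nabla)\theta_2\,\eta\,dxdy.
\end{align*}
After the harmless bound $|\int\eta v^2|\leq\tfrac12(\|\eta\|_2^2+\|v\|_2^2)$, the whole game is to estimate the two cross terms via Lemma \ref{lemhold} in such a way that only $\|\partial_yv\|_2,\|\partial_y\eta\|_2$ (absorbable on the left) and $\|\partial_xu_2\|_2,\|\partial_x\theta_2\|_2$ (which feed the Gronwall factor) appear, never the uncontrolled $\partial_xv$ or $\partial_x\eta$.

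I would expand $-\int(v\cdot\nabla)v\cdot u_2=-\sum_{i,j}\int v^i\partial_iv^ju_2^j\,dxdy$ (obtained from $\int(v\cdot\nabla)u_2\cdot v$ via IBP and $\text{div}\,v=0$) and handle the four $(i,j)$-contributions separately. For $i=2$, Lemma \ref{lemhold} is applied directly with $f=\partial_yv^j$, $g=v^2$, $h=u_2^j$, giving a bound of size $\epsilon\|\partial_yv\|_2^2+C\|v\|_2^2\|u_2\|_2^2\|\partial_xu_2\|_2^2$. For $(i,j)=(1,1)$ a preliminary IBP in $x$ turns the term into $\tfrac12\int(v^1)^2\partial_xu_2^1\,dxdy$, to which Lemma \ref{lemhold} applies with $h=v^1$: the point is that $\partial_xv^1=-\partial_yv^2$ by incompressibility, so $\|\partial_xv^1\|_2\leq\|\partial_yv\|_2$ is controlled. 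The genuinely delicate case is $(i,j)=(1,2)$, where an IBP in $x$ together with $\partial_xv^1=-\partial_yv^2$ produces
\begin{align*}
-\int_{\mathbb R^2} v^1(\partial_xv^2)u_2^2\,dxdy=-\int_{\mathbb R^2}(\partial_yv^2)u_2^2v^2\,dxdy+\int_{\mathbb R^2} v^1v^2\partial_xu_2^2\,dxdy.
\end{align*}
The first piece is handled exactly as an $i=2$ contribution, and for the second I again apply Lemma \ref{lemhold} with $h=v^1$ so that $\partial_xh=-\partial_yv^2$ is controlled, obtaining
\begin{align*}
\Bigl|\int_{\mathbb R^2} v^1v^2\partial_xu_2^2\,dxdy\Bigr|\leq C\|\partial_xu_2\|_2\|v^1\|_2^{1/2}\|v^2\|_2^{1/2}\|\partial_yv^2\|_2\leq\epsilon\|\partial_yv\|_2^2+C\|\partial_xu_2\|_2^2\|v\|_2^2.
\end{align*}
The term $\int(v\cdot\nabla)\theta_2\,\eta=-\int\theta_2\,v\cdot\nabla\eta$ is treated by the same strategy, splitting by the direction of $\nabla$ and once more invoking $\partial_xv^1=-\partial_yv^2$ to eliminate any $\partial_xv$ that appears, with $\|\theta_2\|_2$, $\|\partial_x\theta_2\|_2$ playing the roles of $\|u_2\|_2$, $\|\partial_xu_2\|_2$.

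Collecting these estimates yields
\begin{align*}
\frac{d}{dt}(\|v\|_2^2+\|\eta\|_2^2)+\|\partial_yv\|_2^2+\|\partial_y\eta\|_2^2\leq K(t)(\|v\|_2^2+\|\eta\|_2^2),
\end{align*}
with $K(t)=C[1+(\|u_2\|_2^2+\|\theta_2\|_2^2)(\|\partial_xu_2\|_2^2+\|\partial_x\theta_2\|_2^2)]$, and the claimed estimate then follows from Gronwall's inequality, the coefficient $K$ being integrable on $(0,T)$ since $(u_2,\theta_2)\in L^\infty(0,T;L^2)\cap L^2(0,T;H^1)$. The hard part, as the case analysis makes clear, is that Lemma \ref{lemhold} demands one $\partial_x$-derivative in $L^2$, but $\partial_xv$ and $\partial_x\eta$ are simply not available in the absence of horizontal dissipation. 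The mechanism that rescues the argument is incompressibility, $\partial_xv^1=-\partial_yv^2$: every integration by parts has to be arranged so that any surviving $\partial_x$ lands either on $u_2$ or $\theta_2$ (already in $L^2$) or on $v^1$ (where it is silently converted to the controlled $\partial_yv^2$). This bookkeeping is the only delicate step; everything else is routine.
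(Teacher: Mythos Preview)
Your proof is correct and follows essentially the same approach as the paper: test the difference equations against $(v,\eta)$ via the Lions--Magenes duality, estimate the cross terms with Lemma~\ref{lemhold}, and exploit incompressibility $\partial_xv^1=-\partial_yv^2$ so that every $\partial_x$ falling on the difference is converted into a controlled $\partial_y$, then close with Young and Gronwall. The only cosmetic difference is that the paper does \emph{not} first integrate by parts to move $\nabla$ onto $v$ (or onto $\eta$): it keeps the terms $u^1\partial_xu_2\cdot u$ and $u^1\partial_x\theta_2\,\theta$ as they stand and applies Lemma~\ref{lemhold} directly with $h=u^1$, which avoids your extra round-trip integration by parts in the $(i,j)=(1,2)$ and the $v^1\partial_x\eta$ cases.
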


\begin{proof}
  Note that $(u,\theta)$ satisfies
  \begin{equation}
    \label{eq1}
    \left\{
    \begin{array}{l}
      \partial_tu+(u_1\cdot\nabla)u+(u\cdot\nabla)u_2+\nabla p-\partial_y^2 u=\theta e_2,\\
      \text{div}\,u=0,\\
      \partial_t\theta+u_1\cdot\nabla\theta+u\cdot\nabla\theta_2-\partial_y^2\theta =0.
    \end{array}
    \right.
  \end{equation}
  By the assumption, we have
  \begin{eqnarray*}
  &(u,\theta) \in L^2(0,T; H^1(\mathbb R^2)),\quad(\partial_tu,\partial_t\theta)\in L^2(0,T; H^{-1}(\mathbb R^2)),
  \end{eqnarray*}
  and consequently, by the Lions--Magenes Lemma (cf. \cite{TEMAM}, it follows
  $$
  \langle\partial_tu,u\rangle=\frac12\frac{d}{dt}\|u\|_2^2,\quad\langle \partial_t\theta,\theta\rangle=\frac12\frac{d}{dt}\|\theta\|_2^2,
  $$
  where $\langle\cdot,\cdot\rangle$ denotes the dual action between $H^{-1}(\mathbb R^2)$ and $H^1(\mathbb R^2)$.
  Thanks to these two identities, testing $(\ref{eq1})_1$ and $(\ref{eq1})_3$ by $u$ and $\theta$, respectively, and summing the resultants up, then it follows from integration by parts that
  \begin{align}
    &\frac12\frac{d}{dt}(\|u\|_2^2+\|\theta\|_2^2)+(\|\partial_yu\|_2^2 +\|\partial_y\theta\|_2^2)\nonumber\\
    =&\int_{\mathbb R^2}[-(u\cdot\nabla)u_2\cdot u+\theta u^2-u\cdot\nabla\theta_2\theta]dxdy\nonumber\\
    =&-\int_{\mathbb R^2}(u^2\partial_yu_2\cdot u+u^1\partial_xu_2\cdot u-\theta u^2+u^1\partial_x\theta_2\theta+u^2\partial_y\theta_2\theta)dxdy \nonumber\\
    =&\int_{\mathbb R^2}(\partial_yu^2u_2\cdot u+u^2u_2\cdot\partial_yu- u^1\partial_xu_2\cdot u+\theta u^2 \nonumber\\
    &-u^1\partial_x\theta_2\theta+\partial_yu^2\theta\theta_2+u^2\partial_y \theta\theta_2)dxdy\nonumber\\
    \leq&C\int_{\mathbb R^2}(|u_2||u||\partial_yu|+|u^1||u||\partial_xu_2|+|\theta||u|+|u^1||\theta||\partial_x \theta_2|\nonumber\\
    &+|\partial_yu||\theta||\theta_2|+|u||\partial_y\theta||\theta_2 |)dxdy:=I.\label{eq2}
    \end{align}
    By Lemma \ref{lemhold} and Young inequality, we can estimate the quantity $I$ as follows
    \begin{align*}
    I\leq&C(\|u_2\|_2^{\frac12}\|\partial_xu_2\|_2^{\frac12}\|u\|_2^{\frac12} \|\partial_yu\|_2^{\frac32}+\|u^1\|_2^{\frac12}\|\partial_xu^1\|_2^{\frac12} \|u\|_2^{\frac12}\|\partial_yu\|_2^{\frac12}\|\partial_xu_2\|_2\\
    &+\|u^1\|_2^{\frac12}\|\partial_xu^1\|_2^{\frac12}\|\theta\|_2^{\frac12} \|\partial_y\theta\|_2^{\frac12}\|\partial_x\theta_2\|_2+\|\partial_yu\|_2 \|\theta\|_2^{\frac12}\|\partial_y\theta\|_2^{\frac12}\|\theta_2 \|_2^{\frac12}\|\partial_x\theta_2\|_2^{\frac12}\\
    &+\|u\|_2^{\frac12}\|\partial_yu\|_2^{\frac12}\|\partial_y\theta\|_2\| \theta_2\|_2^{\frac12}\|\partial_x\theta_2\|_2^{\frac12} +\|\theta\|_2\|u\|_2)\\
    \leq&C(\|u_2\|_2^{\frac12}\|\partial_xu_2\|_2^{\frac12}\|u\|_2^{\frac12} \|\partial_yu\|_2^{\frac32}+\|u \|_2 \|\partial_yu \|_2  \|\partial_xu_2\|_2\\
    &+\|u\|_2^{\frac12}\|\partial_yu\|_2^{\frac12}\|\theta\|_2^{\frac12} \|\partial_y\theta\|_2^{\frac12}\|\partial_x\theta_2\|_2+\|\partial_yu\|_2 \|\theta\|_2^{\frac12}\|\partial_y\theta\|_2^{\frac12}\|\theta_2 \|_2^{\frac12}\|\partial_x\theta_2\|_2^{\frac12}\\
    &+\|u\|_2^{\frac12}\|\partial_yu\|_2^{\frac12}\|\partial_y\theta\|_2\| \theta_2\|_2^{\frac12}\|\partial_x\theta_2\|_2^{\frac12} +\|\theta\|_2\|u\|_2)\\
    \leq&\frac12\|(\partial_yu,\partial_y\theta)\|_2^2+
    C(1+\|(u_2,\theta_2)\|_2^2\|(\partial_xu_2,\partial_x\theta_2)\|_2^2) \|(u,\theta)\|_2^2.
  \end{align*}
  Substituting the above estimate into (\ref{eq2}), then by the Gronwall inequality, the conclusion follows.
\end{proof}

\section{Global well-posedness: lower regular initial data}
\label{sec4}
In this section, we prove the global well-posedness of system (\ref{maineq}), with initial data $(u_0^1, u_0^2, \theta_0)$ in space $X=\{f\in L^2(\mathbb R^2)\,|\,\partial_xf\in L^2(\mathbb R^2)\}$. Since we have already established in Theorem \ref{welh1} the global well-posedness result for $H^1$ initial data, and system (\ref{maineq}) has smoothing effect in the $y$ direction, we actually need to show the local well-posedness for initial data in $X$.

We start with the local in time a priori estimates in terms of the $X$-norm of the initial data.

\begin{proposition}
  \label{prop1}
  Let $(u,\theta)$ be a  solution to system (\ref{maineq}), with initial data $(u_0,\theta_0)\in H^2(\mathbb R^2)$. Denote by
  $$
  f_0=1+\|(u_0,\theta_0,\partial_xu_0,\partial_x\theta_0) \|_2^2.
  $$
  Then, there exists a positive absolute constant $C$, such that
\begin{align*}
  \sup_{0\leq s\leq t}\|(u,\theta,\partial_xu,\partial_x\theta)(s)\|_2^2  +\int_0^t\|(\partial_yu,\partial_y\theta,\partial_{xy}^2u,\partial_{xy}^2 \theta)\|_2^2ds
  \leq \sqrt2f_0,
\end{align*}
for any $t\in(0,\frac{1}{4Cf_0^2})$.
\end{proposition}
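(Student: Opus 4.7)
The plan is to derive a Bernoulli-type differential inequality for a joint energy functional and then to conclude by explicit ODE integration on a short time interval. Working with a smooth $H^2$ solution supplied by Proposition \ref{lemglo}, define
\[
\Phi(t) := 1 + \|u\|_2^2 + \|\theta\|_2^2 + \|\partial_x u\|_2^2 + \|\partial_x\theta\|_2^2,
\]
\[
\Psi(t) := \|\partial_y u\|_2^2 + \|\partial_y\theta\|_2^2 + \|\partial_{xy}^2 u\|_2^2 + \|\partial_{xy}^2\theta\|_2^2,
\]
so that $\Phi(0) = f_0$ and the quantity on the left-hand side of the claim is controlled by $\sup_{s\leq t}\Phi(s) + \int_0^t\Psi(s)\,ds$. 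First, the standard $L^2$ energy test of $(\ref{maineq})_1$ against $u$ and $(\ref{maineq})_3$ against $\theta$ kills the transport terms by incompressibility and leaves only the buoyancy coupling $\int\theta u^2\,dxdy$, which is bounded trivially by $\Phi$. Next, differentiating $(\ref{maineq})_1$ and $(\ref{maineq})_3$ in $x$ and testing against $\partial_x u$ and $\partial_x\theta$ again removes the commutators $\int(u\cdot\nabla)\partial_x u\cdot\partial_x u$ and its $\theta$-analogue by div-free, leaving a linear buoyancy term $\int\partial_x\theta\,\partial_x u^2\,dxdy$ together with the two cubic terms
\[
J_1 := \int (\partial_x u\cdot\nabla) u\cdot\partial_x u\,dxdy, \qquad J_2 := \int (\partial_x u\cdot\nabla)\theta\,\partial_x\theta\,dxdy.
\]

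The crux is to bound $J_1$ and $J_2$ by $\epsilon\Psi + C\Phi^3$ without ever invoking $\|\partial_x^2 u\|_2$ or $\|\partial_x^2\theta\|_2$, which are not dominated by the vertical dissipation. Splitting $J_1$ by the component of $\partial_x u$, the $i=2$ summand $\int\partial_x u^2\,\partial_y u^j\,\partial_x u^j\,dxdy$ yields to a direct application of Lemma \ref{lemhold} with $f=\partial_x u^j$, $g=\partial_x u^2$, $h=\partial_y u^j$, producing a bound of the form $C\|\partial_x u\|_2^{3/2}\|\partial_{xy}^2 u\|_2\|\partial_y u\|_2^{1/2}$, which Young's inequality reduces to $\epsilon\Psi + C\|\partial_x u\|_2^6 \leq \epsilon\Psi + C\Phi^3$. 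The $i=1$ summand $\int\partial_x u^1\,\partial_x u^j\,\partial_x u^j\,dxdy$ is the main obstacle, as every direct choice in Lemma \ref{lemhold} leaves an uncontrolled $\|\partial_x^2 u^j\|_2$ factor. The resolution is to substitute $\partial_x u^1 = -\partial_y u^2$ from incompressibility and integrate by parts in $y$, transforming the summand into $2\int u^2\,\partial_x u^j\,\partial_{xy}^2 u^j\,dxdy$; Lemma \ref{lemhold} now applies cleanly with $f=\partial_{xy}^2 u^j$, $g=\partial_x u^j$, $h=u^2$, giving $C\|\partial_{xy}^2 u\|_2^{3/2}\|\partial_x u\|_2\|u\|_2^{1/2}$, absorbed into $\epsilon\|\partial_{xy}^2 u\|_2^2 + C\|u\|_2^2\|\partial_x u\|_2^4 \leq \epsilon\Psi + C\Phi^3$. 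An entirely analogous treatment disposes of $J_2$, with the same div-free plus integration-by-parts maneuver required for its $\partial_x u^1$ contribution; the linear buoyancy term $\int\partial_x\theta\,\partial_x u^2\,dxdy$ is obviously controlled by $\Phi$.

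Collecting all contributions and choosing $\epsilon$ small enough to absorb the several $\epsilon\Psi$ terms, I arrive at
\[
\Phi'(t) + \Psi(t) \leq C\,\Phi(t)^3
\]
for an absolute constant $C$. Introducing $G(t) := \Phi(t) + \int_0^t\Psi(s)\,ds$, one has $G'(t) \leq C\,\Phi(t)^3 \leq C\,G(t)^3$ with $G(0)=f_0$, which integrates to $G(t)^2 \leq f_0^2/(1 - 2Ctf_0^2)$. For $t\in(0,1/(4Cf_0^2))$ the denominator exceeds $1/2$, so $G(t) \leq \sqrt{2}\,f_0$, from which the stated bound follows since both $\sup_{s\leq t}\Phi(s)$ and $\int_0^t\Psi(s)\,ds$ are dominated by $G(t)$.
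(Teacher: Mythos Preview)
Your proof is correct and follows essentially the same route as the paper: test against $u-\partial_x^2u$ and $\theta-\partial_x^2\theta$, use the divergence-free relation $\partial_xu^1=-\partial_yu^2$ together with an integration by parts in $y$ to eliminate any appearance of $\partial_x^2$-derivatives, bound the resulting trilinear terms via Lemma~\ref{lemhold}, absorb with Young's inequality, and close with the Bernoulli ODE $G'\le CG^3$. The only cosmetic difference is that the paper bundles all nonlinear terms together and rewrites them in one pass, whereas you split $J_1,J_2$ into their $i=1$ and $i=2$ components and handle each separately; both lead to the same estimate $\Phi'+\Psi\le C\Phi^3$. One minor wording point: $\sup_{s\le t}\Phi(s)$ is not literally bounded by $G(t)$ but rather by $\sup_{s\le t}G(s)$, which is itself $\le\sqrt2 f_0$ on the stated interval since the ODE bound holds for every $s$ there.
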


\begin{proof}
  Multiplying equations $(\ref{maineq})_1$ and $(\ref{maineq})_3$ by $u-\partial_x^2u$ and $\theta-\partial_x^2\theta$, respectively, summing the resultants up, and integrating over $\mathbb R^2$, then it follows from integration by parts, Lemma \ref{lemhold} and the Young inequality that
  \begin{align*}
    &\frac12\frac{d}{dt}\|(u,\theta,\partial_xu,\partial_x\theta)\|_2^2 +\|(\partial_yu,\partial_y\theta,\partial_{xy}^2u,\partial_{xy}^2 \theta)\|_2^2\\
    =&\int_{\mathbb R^2}[u^2\theta-(\partial_xu\cdot\nabla) u\cdot\partial_xu+\partial_x\theta\partial_xu^2-\partial_xu\cdot\nabla\theta \partial_x\theta]dxdy\\
    =&\int_{\mathbb R^2}(u^2\theta+\partial_yu^2\partial_xu\cdot\partial_x u-\partial_xu^2\partial_yu\cdot\partial_xu+\partial_x\theta\partial_x u^2\\
    &\qquad+\partial_yu^2\partial_x\theta\partial_x\theta-\partial_xu^2 \partial_y\theta\partial_x\theta)dxdy\\
    =&\int_{\mathbb R^2}(u^2\theta-2u^2\partial_xu\cdot\partial_{xy}^2 u
    +\partial_{xy}^2u^2u\cdot\partial_xu +\partial_xu^2u\cdot\partial_{xy}^2u+\partial_x\theta\partial_x u^2\\
    &\qquad-2u^2\partial_x\theta\partial_{xy}^2\theta+\partial_{xy}^2u^2 \theta\partial_x\theta+ \partial_xu^2 \theta\partial_{xy}^2\theta)dxdy\\
    \leq&C\int_{\mathbb R^2}[|u||\theta|+(|u|+|\theta|)(|\partial_xu| +|\partial_x\theta|)(|\partial_{xy}^2u|+|\partial_{xy}^2\theta|) +|\partial_x\theta||\partial_xu|]dxdy\\
    \leq&C(\|u\|_2\|\theta\|_2+\|(u,\theta)\|_2^{\frac12}\|(\partial_x u,\partial_x\theta)\|_2 \|(\partial_{xy}^2u,\partial_{xy}^2 \theta)\|_2^{\frac32}+\|\partial_xu\|_2\|\partial_x\theta\|_2)\\
    \leq&\frac12\|(\partial_{xy}^2u,\partial_{xy}^2 \theta)\|_2^2+C(1+\|(u,\theta)\|_2^2+\|(\partial_xu, \partial_x\theta)\|_2^2)^3,
  \end{align*}
  and thus
  \begin{align*}
    \frac{d}{dt}\|(u,\theta,\partial_xu,\partial_x\theta)\|_2^2 +\|(\partial_yu,\partial_y\theta,\partial_{xy}^2u,\partial_{xy}^2 \theta)\|_2^2
    \leq  C(1+\|(u,\theta,\partial_xu, \partial_x\theta)\|_2^2)^3,
  \end{align*}
  for an absolute constant $C$.
  Define
  $$
  f(t)=1+\|(u,\theta,\partial_xu,\partial_x\theta)\|_2^2(t) +\int_0^t\|(\partial_yu,\partial_y\theta,\partial_{xy}^2u,\partial_{xy}^2 \theta)\|_2^2ds,
  $$
  then
  $$
  f'(t)\leq Cf^3(t).
  $$
  Solving the above ordinary differential inequality, one has
  $$
  f(t)\leq\frac{f_0}{\sqrt{1-2Cf_0^2t}}\leq\sqrt2f_0,\quad \mbox{ for }t\in\left(0,\frac{1}{4Cf_0^2}\right),
  $$
  which implies the conclusion.
\end{proof}

Based on the above proposition, we can prove the following local well-posedness result.

\begin{proposition}
  \label{locwel}
  Assume that $(u_0,\theta_0)\in L^2(\mathbb R^2)$, and $(\partial_xu_0,\partial_x\theta_0)\in L^2(\mathbb R^2)$. Then there is a positive time $T_0$ depending only on
  $\|(u_0,\theta_0,\partial_xu_0,\partial_x\theta_0)\|_2^2,$
  such that, system (\ref{maineq}) has a unique solution $(u,\theta)$ on $\mathbb R^2\times(0,T_0)$, with initial data $(u_0, \theta_0)$, satisfying
    \begin{eqnarray*}
    &&(u,\theta)\in C([0,T_0]; L^2(\mathbb R^2))\cap L^2(0,T_0; H^1(\mathbb R^2)),\quad\\
    &&(\partial_xu,\partial_x\theta)\in L^\infty(0,T_0; L^2(\mathbb R^2)),\quad (\partial_{xy}^2u,\partial_{xy}^2\theta)\in L^2(0,T_0; L^2(\mathbb R^2)),\\
    &&(\partial_tu,\partial_t\theta)\in L^2(0,T_0; H^{-1}(\mathbb R^2)).
  \end{eqnarray*}
\end{proposition}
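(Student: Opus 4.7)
The plan is to construct the local solution by a standard approximation-and-compactness argument whose only non-routine ingredient is Proposition \ref{prop1}: it is the latter that supplies a lifespan depending only on the $X$-norm of the data. Let $\{(u_{0n},\theta_{0n})\}\subset H^2(\mathbb R^2)$ be a sequence obtained, e.g., by mollification, such that $(u_{0n},\theta_{0n})\to(u_0,\theta_0)$ and $(\partial_xu_{0n},\partial_x\theta_{0n})\to(\partial_xu_0,\partial_x\theta_0)$ in $L^2(\mathbb R^2)$. Setting $f_{0,n}:=1+\|(u_{0n},\theta_{0n},\partial_xu_{0n},\partial_x\theta_{0n})\|_2^2$ and $f_0:=1+\|(u_0,\theta_0,\partial_xu_0,\partial_x\theta_0)\|_2^2$, we may assume $f_{0,n}\leq 2f_0$ for all $n$ large enough. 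Proposition \ref{lemglo} then yields a unique global classical solution $(u_n,\theta_n)$ to (\ref{maineq})--(\ref{ic}) with initial datum $(u_{0n},\theta_{0n})$.

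Next, I would fix $T_0:=1/(16Cf_0^2)$, where $C$ is the absolute constant from Proposition \ref{prop1}. Since $T_0<1/(4Cf_{0,n}^2)$ uniformly in $n$, that proposition supplies the uniform bound
\begin{equation*}
\sup_{0\leq s\leq T_0}\|(u_n,\theta_n,\partial_xu_n,\partial_x\theta_n)(s)\|_2^2+\int_0^{T_0}\|(\partial_yu_n,\partial_y\theta_n,\partial_{xy}^2u_n,\partial_{xy}^2\theta_n)\|_2^2\,ds\leq 2\sqrt{2}\,f_0.
\end{equation*}
Reading off equations (\ref{maineq}) then gives a uniform bound on $(\partial_tu_n,\partial_t\theta_n)$ in $L^2(0,T_0;H^{-1}(\mathbb R^2))$: the linear terms are controlled directly, the pressure is eliminated by testing against divergence-free fields, and the convection $(u_n\cdot\nabla)u_n$, $u_n\cdot\nabla\theta_n$ is written as $\text{div}(u_n\otimes u_n)$, $\text{div}(u_n\theta_n)$, whose $L^2$ norms are controlled by combining the $L^\infty(L^2)$ bound on $(u_n,\partial_xu_n,\theta_n,\partial_x\theta_n)$ with the $L^2(L^2)$ bound on $(\partial_yu_n,\partial_y\theta_n)$ through Lemma \ref{lemhold}.

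With these uniform bounds in hand, weak-$\ast$ compactness in $L^\infty(0,T_0;L^2)$, weak compactness in $L^2(0,T_0;L^2)$ and the Aubin--Lions lemma applied on each ball $B_R$ (with the chain $H^1(B_R)\subset\subset L^2(B_R)\subset H^{-1}(B_R)$) produce, after a diagonal extraction in $R$, a subsequence, still denoted $\{(u_n,\theta_n)\}$, that converges strongly in $L^2(0,T_0;L^2(B_\rho))$ for every $\rho>0$ and weakly or weakly-$\ast$ in all the topologies listed in the conclusion. The strong local convergence is sufficient to pass to the limit in the quadratic nonlinearities when paired against compactly supported test functions, so the limit $(u,\theta)$ is a distributional solution of (\ref{maineq}) on $\mathbb R^2\times(0,T_0)$ attaining the datum $(u_0,\theta_0)$; the regularities stated in the proposition then follow from weak lower semicontinuity of the norms, and $(u,\theta)\in C([0,T_0];L^2)$ from the Lions--Magenes lemma applied with $(u,\theta)\in L^2(0,T_0;H^1)$ and $(\partial_tu,\partial_t\theta)\in L^2(0,T_0;H^{-1})$.

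Uniqueness within this regularity class is an immediate consequence of Proposition \ref{propuniq}, whose hypotheses are satisfied by any two such solutions. I do not expect a serious obstacle in this plan: the one mildly delicate step is producing the $H^{-1}$ estimate on $(\partial_tu_n,\partial_t\theta_n)$, which is where the anisotropy matters and Lemma \ref{lemhold} is used to trade horizontal integrability for vertical derivatives; once that bound is secured, the remainder is a routine weak-compactness/limit-passage argument.
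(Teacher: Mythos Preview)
Your proposal is correct and follows essentially the same route as the paper: approximate by $H^2$ data, invoke Proposition~\ref{lemglo} for global classical solutions, use Proposition~\ref{prop1} to get uniform bounds on a common interval $[0,T_0]$, derive the $L^2(0,T_0;H^{-1})$ bound on the time derivatives from the equations, and conclude by weak compactness plus Aubin--Lions with a diagonal argument over balls $B_R$; uniqueness via Proposition~\ref{propuniq}. The only cosmetic difference is that the paper obtains the $H^{-1}$ bound on $(\partial_t u_n,\partial_t\theta_n)$ by first invoking the Ladyzhenskaya inequality to get $(u_n,\theta_n)\in L^4(\mathbb R^2\times(0,T_0))$ uniformly, whereas you phrase the same control through Lemma~\ref{lemhold}; both yield the needed $L^2$ bound on $u_n\otimes u_n$ and $u_n\theta_n$.
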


\begin{proof}
  Uniqueness is a direct consequence of Proposition \ref{propuniq}, thus we only need to prove the existence. Take a sequence $\{(u_{0n},\theta_{0n})\}\in H^2(\mathbb R^2)$, such that
  $$
  (u_{0n},\theta_{0n},\partial_xu_{0n},\partial_x\theta_{0n}) \rightarrow(u_0,\theta_0,\partial_xu_0,\partial_x\theta_0),\quad\mbox{ in }L^2(\mathbb R^2).
  $$
  By Lemma \ref{lemglo}, for each $n$, there is a unique global classical solution $(u_n,\theta_n)$ to system (\ref{maineq}), with initial data $(u_{0n},\theta_{0n})$. By Proposition \ref{prop1}, there are two positive constants $T_0$ and $C_0$ depending only on $\|(u_0,\theta_0,\partial_xu_0,\partial_x\theta_0)\|_2^2$, such that
  \begin{align*}
  \sup_{0\leq t\leq T_0}\|(u_n,\theta_n,\partial_xu_n,\partial_x\theta_n)(t)\|_2^2  +\int_0^{T_0}\|(\partial_yu_n,\partial_y\theta_n,\partial_{xy}^2u_n, \partial_{xy}^2 \theta_n)\|_2^2dt
  \leq C_0.
  \end{align*}
  Thanks to this estimate, by the Ladyzhenskaya interpolation inequality for $L^4(\mathbb{R}^2)$ (cf. \cite{Constantin_Foias_1988,TEMAM}), we have
  $$
  \|(u_n,\theta_n)\|_{L^4(\mathbb R^2\times(0,T_0))}^4\leq CC_0.
  $$
Consequently, noticing that $L^2(\mathbb R^2)\hookrightarrow H^{-1}(\mathbb R^2)$, it follows from (\ref{maineq}) that
  \begin{align*}
    \|(\partial_tu_n,\partial_t\theta_n)\|_{L^2(0,T_0; H^{-1}(\mathbb R^2))}^2\leq& C(\|\theta_n\|_{L^2(\mathbb R^2\times(0,T_0))}^2+ \|(u_n,\theta_n)\|_{L^4(\mathbb R^2\times(0,T_0))}^4)\\
    \leq& C(C_0+C_0^2).
  \end{align*}
  On account of these estimates, there is a subsequence, still denoted by $\{(u_n,\theta_n)\}$, and $\{(u,\theta)\}$, such that
  \begin{eqnarray*}
    &(u_n,\theta_n,\partial_xu_n,\partial_x\theta_n)\overset{*}{\rightharpoonup} (u,\theta,\partial_xu,\partial_x\theta),\quad\mbox{ in }L^\infty(0,T_0; L^2(\mathbb R^2)),\\
    &(\partial_yu_n,\partial_y\theta_n,\partial_{xy}^2u_n,\partial_{xy}^2 \theta_n)\rightharpoonup(\partial_yu,\partial_y\theta,\partial_{xy}^2 u,\partial_{xy}^2\theta),\quad\mbox{ in }L^2(0,T_0; L^2(\mathbb R^2)),\\
    &(\partial_t u_n,\partial_t\theta_n)\rightharpoonup(\partial_t u,\partial_t\theta),\quad\mbox{ in }L^2(0,T_0; H^{-1}(\mathbb R^2)).
  \end{eqnarray*}
  Moreover, let $R$ be a given positive integer, by the Aubin--Lions lemma, there is a subsequence, depending on $R$, still denoted by $\{(u_n,\theta_n)\}$, such that
  \begin{eqnarray*}
    &&(u_n,\theta_n)\rightarrow(u,\theta),\quad\mbox{ in }L^2(0,T_0; L^2(B_R)).
  \end{eqnarray*}
  Thanks to these convergences, we can apply the diagonal argument in $n$ and $R$ to show that there is a subsequence of $\{(u_n,\theta_n)\}$ that converges to $(u,\theta)$ in $L^2(0,T_0; L^2(B_\rho))$, for every $\rho\in(0,\infty)$. Furthermore, $(u,\theta)$ is a solution to system (\ref{maineq}) on $\mathbb R^2\times(0,T_0)$, with initial data $(u_0,\theta_0)$. The regularities of $(u,\theta)$ follow from the weakly lower semi-continuity of the norms.
\end{proof}

Now, we are ready to prove the global well-posedness result with initial data in the space $X$, that is  Theorem \ref{theorem}.

\begin{proof}[\textbf{Proof of Theorem \ref{theorem}}]
By Proposition \ref{locwel}, there is a unique local solution $(u,\theta)$ to system (\ref{maineq}) on $\mathbb R^2\times(0,T_0)$, with initial data $(u_0,\theta_0)$, such that
\begin{eqnarray*}
    &&(u,\theta)\in C([0,T_0]; L^2(\mathbb R^2))\cap L^2(0,T_0; H^1(\mathbb R^2)),\quad\\
    &&(\partial_xu,\partial_x\theta)\in L^\infty(0,T_0; L^2(\mathbb R^2)),\quad (\partial_{xy}^2u,\partial_{xy}^2\theta)\in L^2(0,T_0; L^2(\mathbb R^2)),\\
    &&(\partial_tu,\partial_t\theta)\in L^2(0,T_0; H^{-1}(\mathbb R^2)).
  \end{eqnarray*}
For any $t\in(0,T_0)$, recalling that $(u,\theta)\in C([0,t]; L^2(\mathbb R^2))\cap L^2(0,t; H^1(\mathbb R^2))$, there is a time $t_1\in(\frac t2,t)$, such that $(u(t_1),\theta(t_1))\in H^1(\mathbb R^2)$. Now, choosing $t_1$ as the initial time, by Theorem \ref{welh1}, one can extend solution $(u,\theta)$ uniquely to arbitrary finite time $T$, such that
\begin{eqnarray*}
&&(u,\theta)\in L^\infty(t_1,T; H^1(\mathbb R^2))\cap C([t_1,T];L^2(\mathbb R^2)),\\
&&(\partial_tu,\partial_t\theta)\in L^2(t_1,T; L^2(\mathbb R^2)),\quad(\partial_yu,\partial_y\theta)\in L^2(t_1,T; H^1(\mathbb R^2)).
\end{eqnarray*}
Therefore, we have established a global solution $(u,\theta)$ to system (\ref{maineq}), subject to (\ref{ic}), satisfying the regularities in the theorem. This completes the proof.
\end{proof}

\section*{Acknowledgments}
J.L. is thankful to the warm hospitality of the Department of Mathematics, Texas A\&M University, where  part of this work was completed. This work is supported in part by a grant of the ONR, and by the  NSF
grants DMS-1109640 and DMS-1109645.

\end{document}